\newtheorem{theorem}{Theorem}
\newtheorem{lem}[theorem]{Lemma}
\newtheorem{proposition}[theorem]{Proposition}
\newcommand{\G}{\mathcal{G}}
\newcommand{\ol}[1]{\overline{#1}}
\tikzstyle{vertex}=[circle, draw, fill=black, inner sep=0pt, minimum width=6pt]
\tikzstyle{pedge}=[draw,-]
\DeclareMathOperator{\Aut}{Aut}
\title{Symmetric Perfect $2$-colorings on $J(10,3)$}
\author[Paul Tricot]{Paul Tricot}
\begin{document}

\maketitle
\begin{center}
   \address{School of Information Science, Tohoku University, Japan} 
\end{center}

\section{Introduction}
A perfect $2$-coloring of a Johnson graph can be associated to one of the non-principal eigenvalue of the graph $\theta_1 > \dots > \theta_k$. The perfect $2$-colorings of the Johnson graphs $J(n,3)$ associated with $\theta_1$ have been characterized by Meyerowitz \cite{7}, and those associated with $\theta_3$ by Martin \cite{6}. Those associated with $\theta_2$ have been studied by several mathematicians. Evans, Gavrilyuk, Goryainov and Vorob'ev \cite{1,3} classified them for $n$ odd and for $n>10$. Avgustinovich and Mogilnykh also studied these perfect $2$-colorings \cite{2,5}, in particular for $n=6$, 7 and 8.\\

In \cite{1}, Gavrilyuk and Goryainov proved that a perfect $2$-coloring of $J(n,3)$ associated with $\theta_2$ and symmetric quotient matrix is possible only when $n \in \{6, 10\}$. In this paper, we survey the known constructions in the case $n=6$, we give a new construction for the two known perfect $2$-colorings in the case $n=10$, and prove that these are the only possible ones.

\section{Preliminaries}

A perfect $m$-coloring of a regular graph $\Gamma$ is a partition $P_1, \dots, P_m$ of the vertices such that there exist fixed numbers $p_{i,j}$ ($i,j \in [m]$) that verify
$$ \forall x \in P_i, |\Gamma(x) \cap P_j| = p_{i,j},$$
where $\Gamma(x)$ is the neighborhood of $x$ in $\Gamma$, which is the set of all its neighbors. The matrix $P = [p_{i,j}]_{1 \leq i,j \leq m}$ is called the quotient matrix of the coloring. This means that for every vertex $x \in P_i$, $x$ has exactly $p_{i,j}$ neighbors in $P_j$. We will say that the perfect coloring is symmetric when the quotient matrix is.\\

$P$ has $m$ eigenvalues that are among the eigenvalues of $\Gamma$, by that we mean eigenvalues of its adjacency matrix. In particular the valency of the graph is always an eigenvalue of $P$ (see \cite{1}).\\

The Johnson graph $J(n,k)$ with parameters $n, k \in \mathbb{N} $ has as vertices the subsets of $[n]:= \{ 1,\dots,n \}$ of size $k$. The vertices $x$ and $y$ are connected when $|x \cap y| = k-1$. The distance between two elements is given by $d(x,y) = k - |x \cap y|$. The graph $J(n,k)$ is regular, with valency $k(n-k)$.\\

The eigenvalues of the Johnson graph $J(n,k)$ are the eigenvalues of its adjacency matrix, and are well known to be $\theta_i := (k-i)(n-k-i)-i$, $i \in \{0,\dots,k\}$. The quotient matrix $P$ of a perfect $2$-coloring of $J(n,k)$ always has as eigenvalue $\theta_0$ which is the valency of the graph, and $\theta_i$ for some $i \in \{1,\dots,k\}$. In this paper we focus on perfect $2$-colorings with eigenvalues $\theta_0$ and $\theta_2$.\\

Under certain conditions on the quotient matrix $P$, it is possible to join the parts of an $m$-coloring to form a coloring with less parts (see \cite[Lemma 1]{2}). For instance, consider a perfect coloring with three parts $P_1, P_2, P_3$ and quotient matrix $P$ that we want to merge into a perfect $2$-coloring with parts $P_1 \cup P_2$ and $P_3$. Then we need to make sure that for $x \in P_1$ and $y \in P_2$, $|\Gamma(x) \cap ( P_1 \cup P_2 ) | = |\Gamma(y) \cap ( P_1 \cup P_2 ) |$, and also $|\Gamma(x) \cap P_3 | = |\Gamma(y) \cap P_3 |$. This is equivalent to $p_{1,1} + p_{1,2} = p_{2,1} + p_{2,2}$ and $p_{1,3} = p_{2,3}$. But since the row sum of $P$ is fixed, only one of these equality is necessary.\\

In the general case, consider a perfect $m$-coloring with parts $P_1, \dots, P_m$, and $C_1, \dots, C_l$ a partition of $[m]$. Then $$\bigcup_{i \in C_1} P_i, \dots, \bigcup_{i \in C_l} P_i$$ is a perfect $l$-coloring if and only if each of the submatrices $ [p_{x,y}]_{x \in C_i, y \in C_j} $ for $i \in [l]$, $ j \in [l-1]$ have constant row sum.\\

One way of finding perfect colorings on the Johnson graph $J(n,k)$ is by the orbit construction method (see \cite[Section 3]{2}). Consider a graph $G$ on $n$ vertices. If there are $m$ orbits of $\Aut(G)$ acting on the set of vertices of $J(n,k)$, then the orbits form a $m$-coloring. Depending on the graph $G$ on $n$ vertices that was chosen for the construction, the $m$-coloring obtained can be merged into a 2 or $3$-coloring. Most of the perfect colorings in the literature are constructed using this method \cite{2,3}.\\

\section{Symmetric Perfect $2$-colorings on $J(6,3)$}

In \cite{1}, Gavrilyuk and Goryainov proved that a symmetric perfect $2$-coloring with eigenvalues $\theta_0$, $\theta_2$ on $J(n,3)$ is possible only when $n \in \{6, 10\}$, and in this case the quotient matrix can only be $ \begin{bmatrix} 2n-8 & n-1\\ n-1 & 2n-8 \end{bmatrix} $. In \cite{5}, Avgustinovich and Mogilnykh showed the following construction for the case $n=6$.\\

The graph $J(6,3)$ is antipodal of diameter 3, which means that for any vertex $v$ there is a unique vertex at distance 3 from $v$. Two such vertices are called antipodal vertices. $J(6,3)$ can be partitioned into 10 pairs of antipodal vertices, which forms a perfect $10$-coloring with quotient matrix $J-I$ (where $J$ is the all 1 matrix and $I$ the identity matrix). This perfect $10$-coloring can be merged into a perfect $2$-coloring with quotient matrix $ \begin{bmatrix} 4 & 5\\ 5 & 4 \end{bmatrix} $ by taking any two groups of five pairs each.\\

$J(6,3)$ is small enough that a computer search can be used to list all possible symmetric perfect $2$-colorings. It turns out that the only possible ones are those mentioned above, and that they are all isomorphic to one of the two perfect $2$-colorings $\{X_1,X_2\}$ and $\{X'_1,X'_2\}$ which induced subgraphs are represented below.

\vspace{1cm}
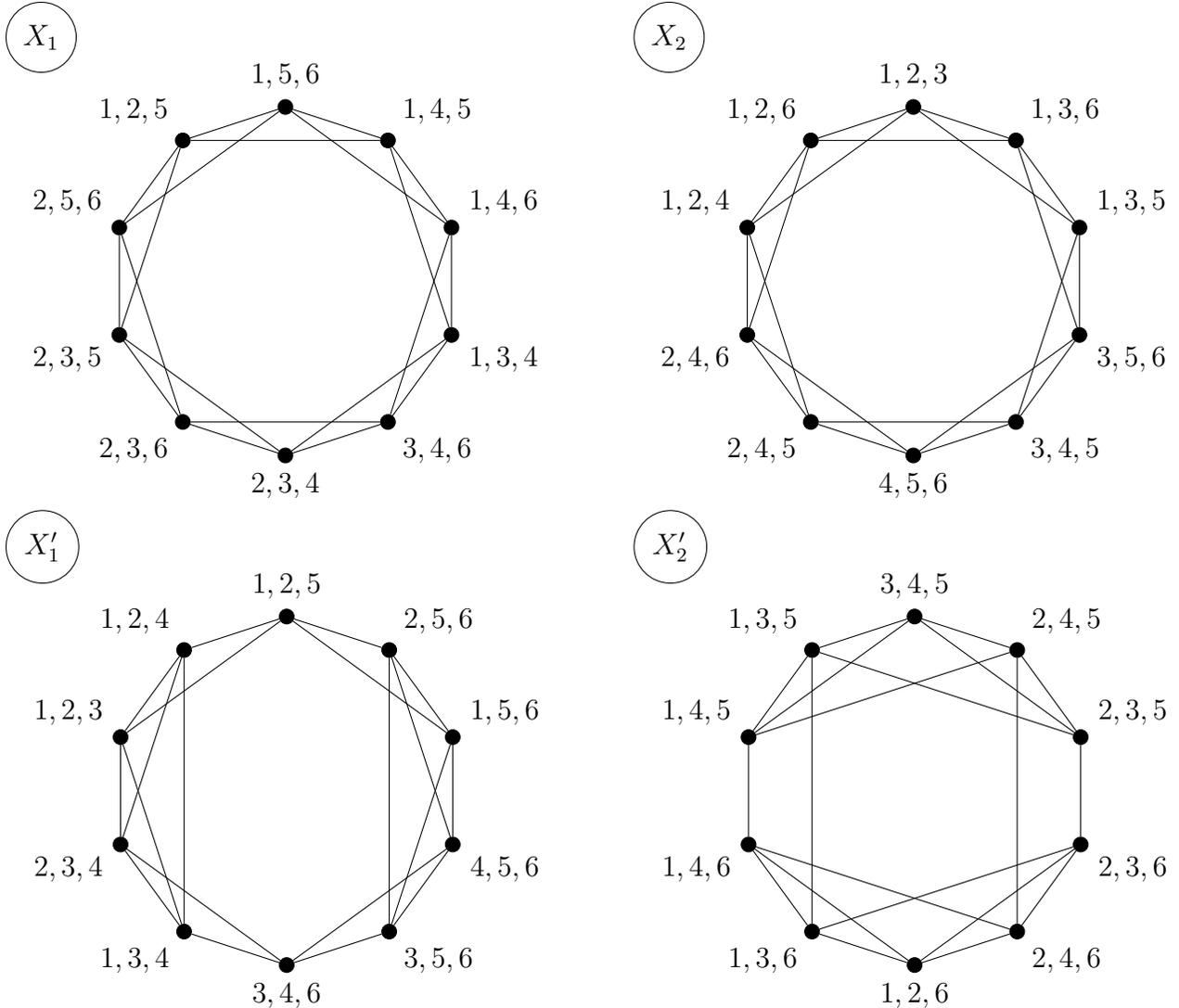
\begin{figure}[H]
\centering
\begin{tikzpicture}

    % \useasboundingbox (-2,2) rectangle (3,6);

    % (2.5*(1+cos(2k*pi/10)),2.5*(1-sin(2k*pi/10)))

        % X_1 
        \draw (-1,6) node[draw, circle] {$X_1$};
	\foreach \pos/\name/\lab/\a in 
    	{{(2.5,5)/P1/$1,5,6$/90},
            {(3.97,4.52)/P2/$1,4,5$/54},
    	{(4.88,3.27)/P3/$1,4,6$/18},
    	{(4.88,1.73)/P4/$1,3,4$/342},
    	{(3.97,0.48)/P5/$3,4,6$/306},
    	{(2.5,0)/P6/$2,3,4$/270},
    	{(1.03,0.48)/P7/$2,3,6$/234},
    	{(0.12,1.73)/P8/$2,3,5$/198},
    	{(0.12,3.27)/P9/$2,5,6$/162},
    	{(1.03,4.52)/P10/$1,2,5$/126}}
    		\node[vertex,label=\a:\lab] (\name) at \pos {};
    		
    	\foreach \source/\dest in
        	{P1/P2, P2/P3, P3/P4, P4/P5, P5/P6, P6/P7, P7/P8, P8/P9, P9/P10, P10/P1, P1/P3, P2/P4, P3/P5, P4/P6, P5/P7, P6/P8, P7/P9, P8/P10, P9/P1, P2/P10}
        		\path[pedge] (\source) -- (\dest);

        % X_2
        \draw (8,6) node[draw, circle] {$X_2$};
        \foreach \pos/\name/\lab/\a in 
    	{{(11.5,5)/P11/$1,2,3$/90},
            {(12.97,4.52)/P12/$1,3,6$/54},
    	{(13.88,3.27)/P13/$1,3,5$/18},
    	{(13.88,1.73)/P14/$3,5,6$/342},
    	{(12.97,0.48)/P15/$3,4,5$/306},
    	{(11.5,0)/P16/$4,5,6$/270},
    	{(10.03,0.48)/P17/$2,4,5$/234},
    	{(9.12,1.73)/P18/$2,4,6$/198},
    	{(9.12,3.27)/P19/$1,2,4$/162},
    	{(10.03,4.52)/P20/$1,2,6$/126}}
    		\node[vertex,label=\a:\lab] (\name) at \pos {};
    		
    	\foreach \source/\dest in
        	{P11/P12, P12/P13, P13/P14, P14/P15, P15/P16, P16/P17, P17/P18, P18/P19, P19/P20, P20/P11, P11/P13, P12/P14, P13/P15, P14/P16, P15/P17, P16/P18, P17/P19, P18/P20, P19/P11, P12/P20}
        		\path[pedge] (\source) -- (\dest);

\end{tikzpicture}

\begin{tikzpicture}
        \draw (-1,6) node[draw, circle] {$X'_1$};
	\foreach \pos/\name/\lab/\a in 
    	{{(2.5,5)/P21/$1,2,5$/90},
            {(3.97,4.52)/P22/$2,5,6$/54},
    	{(4.88,3.27)/P23/$1,5,6$/18},
    	{(4.88,1.73)/P24/$4,5,6$/342},
    	{(3.97,0.48)/P25/$3,5,6$/306},
    	{(2.5,0)/P26/$3,4,6$/270},
    	{(1.03,0.48)/P27/$1,3,4$/234},
    	{(0.12,1.73)/P28/$2,3,4$/198},
    	{(0.12,3.27)/P29/$1,2,3$/162},
    	{(1.03,4.52)/P30/$1,2,4$/126}}
    		\node[vertex,label=\a:\lab] (\name) at \pos {};
    		
    	\foreach \source/\dest in
        	{P21/P22, P22/P23, P23/P24, P24/P25, P25/P26, P26/P27, P27/P28, P28/P29, P29/P30, P30/P21, P21/P23, P22/P24, P22/P25, P23/P25, P24/P26, P26/P28, P27/P29, P27/P30, P28/P30, P29/P21}
        		\path[pedge] (\source) -- (\dest);

        % X'_2
        \draw (8,6) node[draw, circle] {$X'_2$};
        \foreach \pos/\name/\lab/\a in 
    	{{(11.5,5)/P31/$3,4,5$/90},
            {(12.97,4.52)/P32/$2,4,5$/54},
    	{(13.88,3.27)/P33/$2,3,5$/18},
    	{(13.88,1.73)/P34/$2,3,6$/342},
    	{(12.97,0.48)/P35/$2,4,6$/306},
    	{(11.5,0)/P36/$1,2,6$/270},
    	{(10.03,0.48)/P37/$1,3,6$/234},
    	{(9.12,1.73)/P38/$1,4,6$/198},
    	{(9.12,3.27)/P39/$1,4,5$/162},
    	{(10.03,4.52)/P40/$1,3,5$/126}}
    		\node[vertex,label=\a:\lab] (\name) at \pos {};
    		
    	\foreach \source/\dest in
        	{P31/P32, P32/P33, P33/P34, P34/P35, P35/P36, P36/P37, P37/P38, P38/P39, P39/P40, P40/P31, P31/P33, P31/P39, P32/P35, P32/P39, P33/P40, P34/P36, P34/P37, P35/P38, P36/P38, P37/P40}
        		\path[pedge] (\source) -- (\dest);

\end{tikzpicture}
\caption{Induced subgraphs of symmetric perfect $2$-colorings of $J(6,3)$}
\end{figure}

It is interesting to consider the stabilizers of those perfect colorings in the automorphism group of $J(6,3)$, which is known to be induced by the symmetric group on $\{1, \dots, 6\}$ and the complementation map (which maps $3$-subsets to their complement in $\{1, \dots, 6\}$) (see \cite{8}). Since the perfect $2$-colorings above are constructed from pairs of antipodal vertices, they are invariant by the complementation map. Let $G$ be the group of automorphisms of $J(6,3)$ induced by the symmetric group, $S$ be the stabilizer of $\{X_1,X_2\}$ in $G$, and $S'$ the stabilizer of $\{X'_1,X'_2\}$.\\

The subgraph induced by $X_1$ contains 10 triangles that can be separated into two types:
\begin{itemize}
    \item $\{\{a,b,c\}, \{a,b,d\}, \{a,b,e\} \}$, where $a,b,c,d,e$ are distinct elements of $\{1, \dots, 6\}$,
    \item $\{\{a,b,c\}, \{a,b,d\}, \{a,c,d\} \}$, where $a,b,c,d$ are distinct elements of $\{1, \dots, 6\}$.
\end{itemize}
Every element $f \in S$ must induce an automorphism of $X_1$, and thus map each triangle to a triangle of the same type. There are 5 triangles of the first type, which are connected by a common vertex in the shape of a 5-cycle. It follows that $S$ is isomorphic to the automorphism group of a 5-cycle, which is the dihedral group with 10 elements.\\

The subgraph induced by $X'_1$ contains two cliques (complete subgraph) of size 4, of two different types:
\begin{itemize}
    \item $C_1 := \{\{a,b,c\} \in \{1,2,3,4\} \mid a,b,c \text{ distinct}\}$,
    \item $C_2 := \{\{x,5,6\} \mid x \in \{1,2,3,4\}\}$.
\end{itemize}
Every element $f \in S'$ must induce an automorphism of $X'_1$, and thus map each clique to itself. Thus
\begin{align*}
    S' = &\Bigl\{f \in G \mid f(C_1)=C_1, f(C_2)=C_2, f(\{\{1,2,5\}, \{3,4,6\}\})=\{\{1,2,5\}, \{3,4,6\}\}\Bigr\}\\
    = &\Bigl\{f \in G \mid f(\{1,2,3,4\})=\{1,2,3,4\}, f(\{5,6\})=\{5,6\},\\
    & f(\{\{1,2,5\}, \{3,4,6\}\})=\{\{1,2,5\}, \{3,4,6\}\}\Bigr\}\\
    = &\Bigl\{f \in G \mid f(\{1,2,3,4\})=\{1,2,3,4\}, f(\{5,6\})=\{5,6\},\\
    & f(\{1,2,5\})=\{1,2,5\}, f(\{3,4,6\})=\{3,4,6\}\Bigr\}\\
    &\cup \Bigl\{f \in G \mid f(\{1,2,3,4\})=\{1,2,3,4\}, f(\{5,6\})=\{5,6\}, f(\{1,2,5\})=\{3,4,6\},\\
    & f(\{3,4,6\})=\{1,2,5\}\Bigr\}\\
    = &\Bigl\{f \in G \mid f(\{1,2\})=\{1,2\}, f(\{3,4\})=\{3,4\}, f(5)=5, f(6)=6\Bigr\}\\
    &\cup \Bigl\{f \in G \mid f(\{1,2\})=\{3,4\}, f(\{3,4\})=\{1,2\}, f(5)=6, f(6)=5\Bigr\}\\
    = &\{ Id, (1,2), (3,4), (1,2)(3,4), (1,3)(2,4)(5,6), (1,4)(2,3)(5,6), (1,3,2,4)(5,6),\\
    & (1,4,2,3)(5,6)\}\\
    = &\langle (1,2) , (1,3,2,4)(5,6) \rangle.
\end{align*}
This is isomorphic to the dihedral group with 8 elements.\\

\section{Symmetric Perfect $2$-colorings on $J(10,3)$}

 It is mentioned in \cite{1} that for $J(10,3)$ there are only two non-isomorphic perfect $2$-colorings with the symmetric quotient matrix $ \begin{bmatrix} 12 & 9\\ 9 & 12 \end{bmatrix} $. But since a formal proof has never been written, we will attempt to do it here by extending the method used in \cite{3} by R.J. Evans, A.L. Gavrilyuk, S. Goryainov and K. Vorob’ev.\\

There are two known non-isomorphic constructions of perfect $2$-colorings of $J(10,3)$ with the above mentioned symmetric quotient matrix. One of the construction was found by Gavrilyuk and Goryainov (but to our knowledge does not appear in any publication), by using the orbit construction method from two 5-cycles. The second construction can be found in \cite[Construction 3]{2}, using the same method from a complete bipartite graph with parts of size 5, from which we remove a perfect matching.\\

What follows is a different construction for those two $2$-colorings.\\

Let $\G$ be the cycle graph on 10 vertices, and consider the action of $\Aut(\G)$ on $J(10,3)$. The group $\Aut(\G)$ is known as the dihedral group of order 20 consisting of 10 rotations (powers of the cycle permutation $(1, 2,\dots,10)$ ) and 10 reflections. For instance it is generated by the two permutations $(1, 2, \dots, 10)$ and $(2, 10)(3, 9)(4, 8)(5, 7)$. Thus the eight orbits of $\Aut(G)$ acting on $J(10,3)$ are:
\begin{itemize}
    \item $A := \{ \{ a,b,c \} \in J(10,3) \mid d(a,b) = 1, d(b,c) = 1, d(a,c) = 2\}$,
    \item $B := \{ \{ a,b,c \} \in J(10,3) \mid d(a,b) = 1, d(b,c) = 2, d(a,c) = 3\}$,
    \item $C := \{ \{ a,b,c \} \in J(10,3) \mid d(a,b) = 1, d(b,c) = 3, d(a,c) = 4\}$,
    \item $D := \{ \{ a,b,c \} \in J(10,3) \mid d(a,b) = 1, d(b,c) = 4, d(a,c) = 5\}$,
    \item $E := \{ \{ a,b,c \} \in J(10,3) \mid d(a,b) = 2, d(b,c) = 2, d(a,c) = 4\}$,
    \item $F := \{ \{ a,b,c \} \in J(10,3) \mid d(a,b) = 2, d(b,c) = 3, d(a,c) = 5\}$,
    \item $G := \{ \{ a,b,c \} \in J(10,3) \mid d(a,b) = 2, d(b,c) = 4, d(a,c) = 4\}$,
    \item $H := \{ \{ a,b,c \} \in J(10,3) \mid d(a,b) = 3, d(b,c) = 3, d(a,c) = 4\}$.\\
\end{itemize}

These orbits corresponds to the 3 ``types'' of triple of points in the 10-cycle:\\

\begin{figure}[H]
    \centering
    \includegraphics{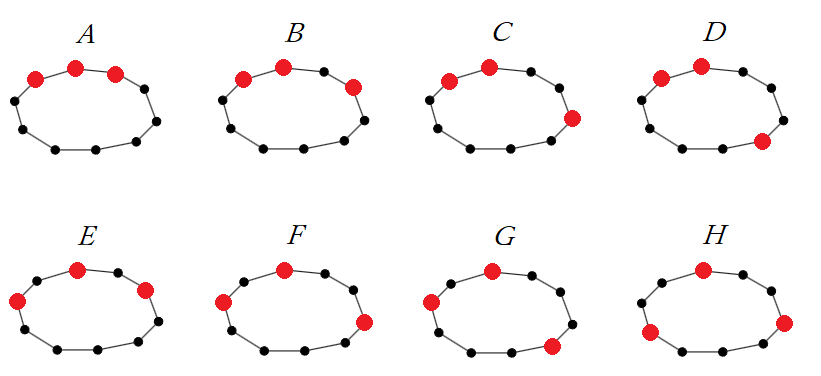}
    \caption{Visualisation of the orbits of $\Aut(\G)$ acting on $J(10,3)$}
    \label{fig:3-orb-in-10-cycle}
\end{figure}

This is a perfect $8$-coloring of $J(10,3)$ with quotient matrix
$$
\begin{bmatrix}
    2&6&4&4&2&2&1&0\\
    3&3&4&2&2&4&1&2\\
    2&4&3&4&1&2&2&3\\
    2&2&4&5&1&4&2&1\\
    2&4&2&2&2&4&4&1\\
    1&4&2&4&2&5&1&2\\
    1&2&4&4&4&2&2&2\\
    0&4&6&2&1&4&2&2\\
\end{bmatrix}
$$
that can be merged in two ways into perfect $2$-colorings. First by $ P_1 := A \cup B \cup C \cup H $ and $ P_2 := D \cup E \cup F \cup G $, and secondly by $ P_1' := C \cup D \cup G \cup H $ and $ P_2' := A \cup B \cup E \cup F $. Those two perfect $2$-colorings are not isomorphic since it can be computed that the subgraph of $J(n,3)$ induced by $P_1$ have different eigenvalues than the one induced by $P_1'$ or $P_2'$.\\

\section{Classification}

We now want to prove that $\{P_1,P_2\}$ and $\{P'_1,P'_2\}$ are the only perfect $2$-colorings (up to isomorphism). We will use the notations and tools of \textup{\cite{1}} and \textup{\cite{3}}. In the rest of this section, we consider a perfect coloring of $J(10,3)$ into two parts $X_1, X_2$ such that the quotient matrix is $ \begin{bmatrix} 12 & 9\\ 9 & 12 \end{bmatrix} $. A simple counting argument shows that $|X_1|=|X_2|=60$.\\

We will denote $abc := \{a,b,c\}$, $ab* := \{a,b,x \mid x \in [10] \setminus \{a,b\} \}$, and denote the intersection with $X_1$ in this way: $\ol{abc} = 1$ if $abc \in X_1$ and $0$ otherwise, and for $S \subseteq J(10,3)$, $\ol{S} := | S \cap X_1 |$. \\

The method consists in looking at how the neighborhood of a point can be distributed between the two parts. The subgraph of $J(10,3)$ induced by the neighborhood of a point $abc$ is isomorphic to a 3 by 7 grid, and we can represent its distribution among $X_1$ and $X_2$ by the nb-array \cite{3}: 
$$
\begin{matrix}
                    & \ol{abd} & \ol{abe} & \ol{abf} & \dots & \dots & \dots & \dots & \gets ab\text{-row}\\
    N(abc) : & \ol{acd} & \ol{ace} & \ol{acf} & \dots & \dots & \dots & \dots & \gets ac\text{-row}\\
                    & \ol{bcd} & \ol{bce} & \ol{bcf} & \dots & \dots & \dots & \dots & \gets bc\text{-row}\\
                    & \uparrow & \uparrow & \uparrow &\\
                    & d & e & f & \cdots
\end{matrix}
$$
The order of the rows and columns is arbitrary.\\

Most of the proof will rely on the following lemma from \textup{\cite{3}}.

\begin{lem}[\cite{3}] \label{lma1}
    For any five distinct elements $a,b,c,d,e \in [10]$, we have
    $$
        \ol{ab*} - \ol{ac*} = 3 ( \ol{abd} + \ol{abe} + \ol{cde} - \ol{acd} - \ol{ace} - \ol{bde} ).
    $$
\end{lem}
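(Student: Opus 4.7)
The plan is to extract a simple identity, valid for every 3-subset, directly from the perfect-coloring property, and then recover the lemma by a short signed combination of that identity applied to six well-chosen triples.

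For any distinct $a,b,c \in [10]$, the neighborhood of $abc$ in $J(10,3)$ consists of the 21 triples $abx, acx, bcx$ with $x \in [10] \setminus \{a,b,c\}$. By the perfect-coloring property, $abc$ has exactly $12\,\ol{abc} + 9(1 - \ol{abc}) = 9 + 3\,\ol{abc}$ neighbors in $X_1$. Counting that intersection gives
$$ (\ol{ab*} - \ol{abc}) + (\ol{ac*} - \ol{abc}) + (\ol{bc*} - \ol{abc}) = 9 + 3\,\ol{abc}, $$
which rearranges to the base identity
$$ \ol{ab*} + \ol{ac*} + \ol{bc*} = 9 + 6\,\ol{abc}, \qquad (\star_{abc}) $$
valid for every 3-subset $\{a,b,c\} \subseteq [10]$.

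Next, I apply $(\star)$ to the six triples built from $\{a,b,c,d,e\}$, namely $abd, abe, acd, ace, cde, bde$, with signs $+,+,-,-,+,-$ respectively. Since these signs sum to $0$, the six constant terms $9$ cancel. On the right-hand side, the weighted sum of the $6\,\ol{\cdot}$ contributions is precisely $6(\ol{abd} + \ol{abe} + \ol{cde} - \ol{acd} - \ol{ace} - \ol{bde})$. On the left-hand side, each of the seven ``spoke'' stars $\ol{ad*}, \ol{ae*}, \ol{bd*}, \ol{be*}, \ol{cd*}, \ol{ce*}, \ol{de*}$ appears in exactly two of the six relations, once with a $+$ sign and once with a $-$ sign, and therefore cancels; whereas $\ol{ab*}$ appears with total coefficient $+2$ (from $\star_{abd}$ and $\star_{abe}$) and $\ol{ac*}$ with total coefficient $-2$ (from $\star_{acd}$ and $\star_{ace}$). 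Dividing through by $2$ yields the identity in the statement.

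The only non-routine step is guessing the correct sign pattern on the six $(\star)$-relations; this is essentially a small linear-algebra problem of expressing $\ol{ab*} - \ol{ac*}$ as a $\mathbb{Z}$-combination of the $(\star)$-relations on triples drawn from $\{a,b,c,d,e\}$. Once the signs are identified, the verification is a mechanical bookkeeping of which star terms appear in which $(\star)$, so I anticipate no conceptual obstacle in writing the details.
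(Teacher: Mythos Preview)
Your argument is correct. The base identity $(\star_{abc})$ follows directly from the quotient matrix $\begin{bmatrix}12&9\\9&12\end{bmatrix}$, and the signed combination of the six instances you chose does exactly what you claim: each of the seven stars $\ol{ad*},\ol{ae*},\ol{bd*},\ol{be*},\ol{cd*},\ol{ce*},\ol{de*}$ occurs once with each sign and cancels, the pair $bc$ never appears, and $\ol{ab*}$, $\ol{ac*}$ survive with coefficients $+2$ and $-2$. Dividing by $2$ gives the stated identity.

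As for comparison with the paper: there is nothing to compare. The paper does not prove this lemma; it simply imports it from \cite{3} and uses it as a black box throughout Section~5. Your proof supplies what the paper omits, and does so by the natural route --- the identity $(\star_{abc})$ is essentially the only equation one can extract pointwise from the perfect-coloring condition, and the lemma is a linear consequence of it.
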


In particular, since $\ol{ab*} - \ol{abc}$ is the row sum of the row $ab$ in $N(abc)$, we can see that the difference between two row sums in $N(abc)$ is always a multiple of 3. Moreover each row consists of seven $0$ and $1$, so each row sum is between $0$ and $7$, and the difference between two row sums is $0$, $3$ or $6$. And since when $\ol{abc} = 1$ the total sum of the nb-array must be 12, we have the following.

\begin{lem} \label{lma2}
    For $abc \in X_1$, the multiset of row sums of $N(abc)$ is among $$\{3,3,6\}, \{4,4,4\}, \{2,5,5\}, \{0,6,6\}, \{1,4,7\} .$$
\end{lem}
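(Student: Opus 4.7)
The plan is a short arithmetic case analysis built on \Cref{lma1}. Since $abc \in X_1$ and the quotient matrix has $p_{1,1} = 12$, the total number of entries equal to $1$ in $N(abc)$ is $12$, so the three row sums $r_1, r_2, r_3$ of the nb-array must satisfy $r_1 + r_2 + r_3 = 12$. Each row is a string of seven $0/1$ entries, so each $r_i \in \{0,1,\dots,7\}$.

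Next I would extract from \Cref{lma1} the divisibility information. Taking two rows, say the $ab$-row and the $ac$-row of $N(abc)$, their sums differ by $\ol{ab*} - \ol{ac*} - (\ol{abc} - \ol{abc}) = \ol{ab*} - \ol{ac*}$, which by \Cref{lma1} is a multiple of $3$. Hence any two row sums differ by a multiple of $3$, and since $|r_i - r_j| \leq 7$, the possible differences are $0$, $3$, or $6$. In particular, $r_1, r_2, r_3$ all lie in the same residue class modulo $3$.

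Finally I would split into three cases according to the common residue.
\begin{itemize}
    \item If $r_i \equiv 0 \pmod 3$, each $r_i \in \{0,3,6\}$, and the triples summing to $12$ are (as multisets) $\{0,6,6\}$ and $\{3,3,6\}$.
    \item If $r_i \equiv 1 \pmod 3$, each $r_i \in \{1,4,7\}$, giving the triples $\{4,4,4\}$ and $\{1,4,7\}$ (note $\{1,1,?\}$ and $\{7,7,?\}$ are excluded by the range).
    \item If $r_i \equiv 2 \pmod 3$, each $r_i \in \{2,5\}$ (since $8 > 7$), giving only $\{2,5,5\}$.
\end{itemize}
Each resulting multiset trivially satisfies the pairwise-difference condition, so this list is exactly the five cases claimed.

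There is no real obstacle: the only substantive ingredient is the mod-$3$ divisibility, which is given for free by \Cref{lma1}, and the rest is a one-line enumeration of solutions to $r_1 + r_2 + r_3 = 12$ with $r_i \in [0,7]$ lying in a fixed residue class mod $3$.
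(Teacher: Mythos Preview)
Your argument is correct and is essentially the same as the paper's: the paper notes just before \Cref{lma2} that each row sum lies in $\{0,\dots,7\}$, that by \Cref{lma1} any two row sums differ by a multiple of $3$, and that the total is $12$ when $abc\in X_1$, from which the five multisets follow. Your only addition is to make the final enumeration explicit via the common residue class, which is fine but not a genuinely different route.
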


We also have as a direct consequence of \Cref{lma1} :

\begin{lem}
    For any $a,b,c,d \in [10]$ with $a \ne b$ and $c \ne d$, $\ol{ab*} \equiv \ol{cd*} \pmod 3$.
\end{lem}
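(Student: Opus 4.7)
The plan is to deduce this directly from \Cref{lma1} by showing that the congruence class of $\ol{ab*}$ modulo $3$ is invariant under changing one of the two indices, and then chaining two such replacements to get between any two pairs.

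First I would observe that for any three distinct elements $a, b, c \in [10]$, since $|[10]| = 10 \geq 5$, we can pick two further distinct elements $d, e \in [10] \setminus \{a,b,c\}$ and apply \Cref{lma1}. The right-hand side is a multiple of $3$, hence
$$\ol{ab*} \equiv \ol{ac*} \pmod 3.$$
In other words, whenever two pairs share one common element, the corresponding quantities $\ol{\cdot\, \cdot \,*}$ are congruent mod~$3$.

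Now, given arbitrary pairs $\{a,b\}$ and $\{c,d\}$ with $a \ne b$ and $c \ne d$, I would split into two cases. If $\{a,b\} \cap \{c,d\} \neq \emptyset$, the congruence follows from the observation above applied once. If $\{a,b\} \cap \{c,d\} = \emptyset$, then $a, b, c, d$ are four distinct elements, and I would use the intermediate pair $\{a,c\}$:
$$\ol{ab*} \equiv \ol{ac*} \pmod 3 \quad \text{and} \quad \ol{ac*} \equiv \ol{cd*} \pmod 3,$$
each by the previous observation, and the result follows by transitivity.

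There is no real obstacle here; the only point to check is that we always have enough room in $[10]$ to supply the two auxiliary elements needed by \Cref{lma1}, which is immediate since $n = 10$.
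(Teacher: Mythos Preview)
Your proof is correct and is exactly the ``direct consequence of \Cref{lma1}'' that the paper alludes to without spelling out the details; you simply make explicit the two-step reduction via a shared index. There is nothing to add.
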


Because of this, we can define the type of a part $X_1$ of a partition to be $k \in \{0,1,2\}$ if for any distinct $a,b \in [10]$, $\ol{ab*} \equiv k \pmod 3$. When $\ol{abc} = 1$, a row sum in $N(abc)$ must be $k-1 \pmod 3$. So we can further restrict the possibilities in \Cref{lma2}.

\begin{lem}
\begin{enumerate}
    \item If $X_1$ is of type 0, for $abc \in X_1$ the row sums of $N(abc)$ is $\{2,5,5\}.$
    \item If $X_1$ is of type 1, for $abc \in X_1$ the row sums of $N(abc)$ is $\{3,3,6\}$ or $\{0,6,6\}.$
    \item If $X_1$ is of type 2, for $abc \in X_1$ the row sums of $N(abc)$ is $\{4,4,4\}$ or $\{1,4,7\}.$
\end{enumerate}
\end{lem}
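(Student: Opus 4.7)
The plan is a short deduction by combining Lemma~\ref{lma2} with a congruence coming from the definition of the type of $X_1$.

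First I would note that for $abc \in X_1$ with $\ol{abc} = 1$, each row sum of $N(abc)$ has a direct interpretation in terms of the quantities $\ol{ab*}$: the $ab$-row of $N(abc)$ consists of the seven values $\ol{abx}$ for $x \in [10] \setminus \{a,b,c\}$, so the $ab$-row sum equals $\ol{ab*} - \ol{abc} = \ol{ab*} - 1$. The same identity holds for the $ac$- and $bc$-rows.

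Next, by the definition of type, if $X_1$ is of type $k$ then $\ol{ab*} \equiv k \pmod 3$ for any pair of distinct $a,b$, and hence each of the three row sums of $N(abc)$ is congruent to $k-1 \pmod 3$. It then suffices to intersect this congruence condition with the list of possible multisets of row sums provided by \Cref{lma2}, namely $\{3,3,6\}, \{4,4,4\}, \{2,5,5\}, \{0,6,6\}, \{1,4,7\}$.

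Case $k=0$: the row sums must be $\equiv 2 \pmod 3$, so they lie in $\{2,5\}$; the only multiset from the list that is compatible is $\{2,5,5\}$. Case $k=1$: the row sums must be $\equiv 0 \pmod 3$, so they lie in $\{0,3,6\}$, giving the two admissible multisets $\{3,3,6\}$ and $\{0,6,6\}$. Case $k=2$: the row sums must be $\equiv 1 \pmod 3$, so they lie in $\{1,4,7\}$, giving $\{4,4,4\}$ and $\{1,4,7\}$. There is no real obstacle here: once the identification of the row sum with $\ol{ab*}-1$ is made, the rest is a one-line mod~$3$ filter on the list from \Cref{lma2}.
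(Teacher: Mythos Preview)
Your proposal is correct and matches the paper's own reasoning essentially verbatim: the paper notes just before the lemma that each row sum equals $\ol{ab*}-1\equiv k-1\pmod 3$ and then filters the list from \Cref{lma2}, which is precisely what you do. There is nothing to add.
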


We can also use \Cref{lma1} to eliminate some 2 by 2 patterns that can not appear in $N(abc)$. For a row $r$ of $N(abc)$, denote by $\ol{r}$ the sum of the row $r$. If we fix the order of rows and columns then nb-arrays can be seen as matrices, so we will use matrices in the next lemma for convenience.\\

The next lemma is an extension of \textup{\cite[Lemma 5.4]{3}}.

\begin{lem} \label{lma4}
    Let $abc \in J(10,3)$ and $r_1, r_2$ be two rows of $N(abc)$ such that $\ol{r_1} \ge \ol{r_2}$. Then
    \begin{enumerate}
        \item $\begin{bmatrix} 0 & 0\\ 1 & 1 \end{bmatrix}$ is not a submatrix of $\begin{bmatrix} r_1\\ r_2 \end{bmatrix}$,
        \item If $\ol{r_1} - \ol{r_2} \ge 3$ then $\begin{bmatrix} 0 & 0\\ 0 & 1 \end{bmatrix}$, $\begin{bmatrix} 0 & 0\\ 1 & 0 \end{bmatrix}$, $\begin{bmatrix} 1 & 0\\ 1 & 1 \end{bmatrix}$, $\begin{bmatrix} 0 & 1\\ 1 & 1 \end{bmatrix}$ are not submatrices of $\begin{bmatrix} r_1\\ r_2 \end{bmatrix}$,
        \item If $\ol{r_1} - \ol{r_2} = 6$ then $\begin{bmatrix} 0 & 0\\ 0 & 0 \end{bmatrix}$, $\begin{bmatrix} 1 & 1\\ 1 & 1 \end{bmatrix}$, $\begin{bmatrix} 1 & 0\\ 0 & 1 \end{bmatrix}$, $\begin{bmatrix} 0 & 1\\ 1 & 0 \end{bmatrix}$, $\begin{bmatrix} 1 & 0\\ 1 & 0 \end{bmatrix}$, $\begin{bmatrix} 0 & 1\\ 0 & 1 \end{bmatrix}$ are not submatrices of $\begin{bmatrix} r_1\\ r_2 \end{bmatrix}$.\\
    \end{enumerate}
\end{lem}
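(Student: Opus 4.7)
The plan is to use \Cref{lma1} as a single identity relating the entries of any $2\times 2$ submatrix of two rows of $N(abc)$ to the difference of their row sums. Without loss of generality, suppose $r_1$ is the $ab$-row and $r_2$ the $ac$-row of $N(abc)$; the remaining row pairs are obtained by permuting the roles of $a,b,c$. Since the row sum of the $xy$-row equals $\ol{xy*}-\ol{abc}$, we have $\ol{r_1}-\ol{r_2}=\ol{ab*}-\ol{ac*}$, so for any two distinct columns $d,e$ of $N(abc)$ the formula of \Cref{lma1} rearranges to
\begin{equation*}
\bigl(\ol{abd}+\ol{abe}\bigr)-\bigl(\ol{acd}+\ol{ace}\bigr)\;=\;\frac{\ol{r_1}-\ol{r_2}}{3}+\bigl(\ol{bde}-\ol{cde}\bigr).
\end{equation*}

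The left-hand side $L$ of this identity depends only on the $2\times 2$ submatrix of $\begin{bmatrix} r_1\\ r_2\end{bmatrix}$ on columns $d,e$: it equals the sum of entries in its top row minus the sum in its bottom row. The correction term $\ol{bde}-\ol{cde}$ lies in $\{-1,0,1\}$. Setting $\Delta:=\ol{r_1}-\ol{r_2}\ge 0$, the identity forces $\Delta/3\in\{L-1,L,L+1\}$. Each forbidden pattern in the statement therefore pins $L$ to a specific value, which one then reconciles with the hypothesized lower bound on $\Delta$.

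Concretely, for item (1), $\begin{bmatrix}0&0\\1&1\end{bmatrix}$ has $L=-2$, forcing $\Delta/3\le -1<0$, which contradicts $\Delta\ge 0$. For item (2), each of the four listed patterns has $L=-1$, forcing $\Delta/3\le 0$, which contradicts $\Delta\ge 3$. For item (3), each of the six listed patterns has $L=0$, forcing $\Delta/3\le 1$, which contradicts $\Delta/3=2$.

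The only non-routine point is the initial symmetry reduction. One checks directly that the roles of $a,b,c$ can be permuted in \Cref{lma1}, so the rearranged identity above applies to any ordered pair of rows of $N(abc)$; only the two auxiliary triples in the correction term change, but the bound $|\ol{bde}-\ol{cde}|\le 1$ is preserved. Everything else reduces to the nine-case check described above, which is how item (1) is already handled in \cite[Lemma 5.4]{3} and how items (2) and (3) extend it.
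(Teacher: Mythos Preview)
Your proof is correct. The paper does not actually supply its own proof of this lemma; it merely states it as ``an extension of \cite[Lemma~5.4]{3}'' and moves on. Your argument---rearranging \Cref{lma1} into $L=\Delta/3+(\ol{bde}-\ol{cde})$ and then bounding $\Delta/3$ via $|\ol{bde}-\ol{cde}|\le 1$---is exactly the intended extension, and it matches how the paper applies \Cref{lma1} in the subsequent lemmas (e.g.\ the proofs of Lemmas~7--10, where the same rearrangement is used repeatedly). The symmetry reduction to the $(ab,ac)$ pair is fine, since permuting $a,b,c$ cycles through the three row pairs while preserving the form of the identity.
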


From this lemma, the possible forms of $N(abc)$ can be restricted. For convenience, in the lemmas below let $abc \in X_1$ and $N:=N(abc)$.

\begin{lem} \label{lma3}
    \begin{enumerate}
        \item If the row sums of $N$ are $\{ 2,5,5 \}$, then $N$ is 
        \begin{align}
            \begin{matrix}
                & 1 & 1 & 1 & 1 & 1 & 0 & 0\\
                & 1 & 1 & 1 & 1 & 1 & 0 & 0\\
                & 1 & 1 & 0 & 0 & 0 & 0 & 0\\
            \end{matrix} \label{nbarray1}\\
            \nonumber \\ 
            \begin{matrix}
                & 1 & 1 & 1 & 1 & 1 & 0 & 0\\
                \text{or }& 1 & 1 & 1 & 1 & 0 & 1 & 0\\
                & 1 & 1 & 0 & 0 & 0 & 0 & 0\\
            \end{matrix} \label{nbarray2}
        \end{align}
        
        \item If the row sums of $N$ are $\{ 3,3,6 \}$, then $N$ is
        \begin{align}
            \begin{matrix}
                & 1 & 1 & 1 & 1 & 1 & 1 & 0\\
                & 1 & 1 & 1 & 0 & 0 & 0 & 0\\
                & 1 & 1 & 0 & 1 & 0 & 0 & 0\\
            \end{matrix} \label{nbarray3}\\
            \nonumber \\ 
            \begin{matrix}
                & 1 & 1 & 1 & 1 & 1 & 1 & 0\\
                \text{or }& 1 & 1 & 1 & 0 & 0 & 0 & 0\\
                & 1 & 1 & 1 & 0 & 0 & 0 & 0\\
            \end{matrix} \label{nbarray4}
        \end{align}
        
        \item If the row sums of $N$ are $\{ 0,6,6 \}$, then $N$ is
        \begin{align}
            \begin{matrix}
                & 1 & 1 & 1 & 1 & 1 & 1 & 0\\
               & 1 & 1 & 1 & 1 & 1 & 1 & 0\\
                & 0 & 0 & 0 & 0 & 0 & 0 & 0\\
            \end{matrix} \label{nbarray5}\\
            \nonumber \\ 
            \begin{matrix}
                & 1 & 1 & 1 & 1 & 1 & 1 & 0\\
                \text{or }& 1 & 1 & 1 & 1 & 1 & 0 & 1\\
                & 0 & 0 & 0 & 0 & 0 & 0 & 0\\
            \end{matrix} \label{nbarray6}
        \end{align}

        \item If the row sums of $N$ are $\{ 4,4,4 \}$, then $N$ is
        \begin{align}
            \begin{matrix}
                & 1 & 1 & 1 & 1 & 0 & 0 & 0\\
               & 1 & 1 & 1 & 1 & 0 & 0 & 0\\
                & 1 & 1 & 1 & 1 & 0 & 0 & 0\\
            \end{matrix} \label{nbarray7}\\
            \nonumber \\ 
            \begin{matrix}
                & 1 & 1 & 1 & 1 & 0 & 0 & 0\\
                \text{or }& 1 & 1 & 1 & 0 & 1 & 0 & 0\\
                & 1 & 1 & 1 & 0 & 1 & 0 & 0\\
            \end{matrix} \label{nbarray8}\\
            \nonumber \\ 
            \begin{matrix}
                & 1 & 1 & 1 & 1 & 0 & 0 & 0\\
                \text{or }& 1 & 1 & 1 & 0 & 1 & 0 & 0\\
                & 1 & 1 & 1 & 0 & 0 & 1 & 0\\
            \end{matrix} \label{nbarray9}\\
            \nonumber \\ 
            \begin{matrix}
                & 1 & 1 & 1 & 1 & 0 & 0 & 0\\
                \text{or }& 1 & 1 & 1 & 0 & 1 & 0 & 0\\
                & 1 & 1 & 0 & 1 & 1 & 0 & 0\\
            \end{matrix} \label{nbarray10}
        \end{align}

        \item If the row sums of $N$ are $\{ 1,4,7 \}$, then $N$ is
        \begin{align}
            \begin{matrix}
                & 1 & 1 & 1 & 1 & 1 & 1 & 1\\
               & 1 & 1 & 1 & 1 & 0 & 0 & 0\\
                & 1 & 0 & 0 & 0 & 0 & 0 & 0\\
            \end{matrix} \label{nbarray11}
        \end{align}
    \end{enumerate}
\end{lem}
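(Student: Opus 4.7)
The plan is to handle each of the five row-sum patterns from Lemma \ref{lma2} separately. For each pattern, I exploit the freedom to permute rows within a fixed row-sum class and to permute columns arbitrarily so as to bring $N:=N(abc)$ into a canonical form, then apply Lemma \ref{lma4} to eliminate configurations outside the announced list. Denote by $S_i \subseteq \{1,\dots,7\}$ the set of columns where row $r_i$ has a $1$, so $|S_i|=\overline{r_i}$. The forbidden submatrix patterns of Lemma \ref{lma4} translate into clean set-theoretic constraints on the $S_i$: when $\overline{r_i}=\overline{r_j}$, part (1) forces $|S_i \triangle S_j| \leq 2$, so two rows of equal sum agree in all but at most one $1$-position; when $\overline{r_i}-\overline{r_j}=3$, a short case analysis combining parts (1) and (2) forces $S_j \subseteq S_i$ in all row-sum patterns that occur here (the apparent escape with $|S_i|=6$, $|S_j|=3$ is killed by the forbidden pattern $\bigl[\begin{smallmatrix} 0 & 1 \\ 1 & 1 \end{smallmatrix}\bigr]$); and when $\overline{r_i}-\overline{r_j}=6$, part (3) is in fact vacuous here, since the heavy row has at most one zero and the light row at most one one.

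With these three principles in hand, most cases collapse to a short enumeration. The patterns $\{0,6,6\}$ and $\{1,4,7\}$ are almost immediate: the extreme rows are pinned up to column permutation, and the only residual freedom is whether the two $6$-rows share or disagree on their unique zero, which gives \eqref{nbarray5}--\eqref{nbarray6} and \eqref{nbarray11}. For $\{2,5,5\}$ and $\{3,3,6\}$, once the heaviest row is placed with its $1$s in columns $1,\dots,|S_1|$, the second heavy row either coincides with it or differs in exactly one $1$-column, and the light row is then forced inside $S_1 \cap S_2$, yielding the two listed forms in each case.

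The $\{4,4,4\}$ pattern is the main obstacle. Here the three rows pairwise satisfy $|S_i \cap S_j| \geq 3$, and I would enumerate triples $(S_1,S_2,S_3)$ of $4$-subsets of $\{1,\dots,7\}$ with this property up to the symmetric group on columns and up to permutation of the three rows. Starting from $S_1=\{1,2,3,4\}$, I split on whether $S_2=S_1$ or $|S_1 \triangle S_2|=2$ (normalizing $S_2=\{1,2,3,5\}$ in the latter), and then analyze $S_3$ through the parameters $|S_3 \cap \{1,2,3\}|$ together with the incidences of $S_3$ with $\{4\}$, $\{5\}$ and $\{6,7\}$. This yields exactly four canonical configurations---all three rows equal, exactly two equal, all distinct sharing a common triple $\{1,2,3\}$, and a \emph{triangular} pattern in which each pair of rows shares a different triple---matching \eqref{nbarray7}--\eqref{nbarray10}. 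The only real work is the bookkeeping: using column-permutation symmetry consistently so the enumeration is exhaustive and confirming the four resulting forms are pairwise inequivalent.
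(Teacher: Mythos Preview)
Your proposal is correct and follows precisely the route the paper intends: the paper states Lemma~\ref{lma3} without proof, presenting it as a direct consequence of the forbidden $2\times 2$ submatrices in Lemma~\ref{lma4}, and your argument is an explicit and accurate execution of that enumeration. Your set-theoretic reformulation (the constraints on $S_i \triangle S_j$ and the containment $S_j \subseteq S_i$ for row-sum gap $3$) is exactly the right way to organize the casework, and the $\{4,4,4\}$ analysis is complete.
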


Now let us show that for each of the five situations above, all but one form of $N$ lead to a contradiction when considering other points than $abc$.

% 1. Case { 2, 5, 5 } 
 
\begin{lem}
    If $N$ has row sums $\{2,5,5\}$, then $N$ is of the form \eqref{nbarray2}.
\end{lem}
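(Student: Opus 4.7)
The plan is to suppose, for contradiction, that $N := N(abc)$ takes form \eqref{nbarray1}, and to exhibit inside the nb-array of a well-chosen neighbor $abd_1$ of $abc$ a $2 \times 2$ submatrix forbidden by \Cref{lma4}. Label the seven columns of $N$ as $d_1, \ldots, d_5, i, j$ so that the $ab$-row and $ac$-row both read $(1,1,1,1,1,0,0)$ and the $bc$-row reads $(1,1,0,0,0,0,0)$---this alignment of the two zero-columns in the $ab$- and $ac$-rows is precisely the feature distinguishing \eqref{nbarray1} from \eqref{nbarray2}.

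The core of the argument is to pin down the two entries $\overline{bd_1 i}$ and $\overline{bd_1 j}$ by two successive applications of \Cref{lma1}. From the labelling, $\ol{ab*} = \ol{ac*} = 6$ and $\ol{bc*} = 3$. Applying \Cref{lma1} with five-tuple $(b,a,c,d_1,i)$ to the identity $\ol{ba*} - \ol{bc*} = 3$ uses $\overline{bcd_1} = 1$ and $\overline{abi} = \overline{bci} = 0$ to yield $\overline{cd_1 i} = 1$. Applying it again with $(a,b,c,d_1,i)$ to $\ol{ab*} - \ol{ac*} = 0$ then collapses to $\overline{bd_1 i} = \overline{cd_1 i} = 1$. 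The symmetric computation with $j$ in place of $i$ gives $\overline{bd_1 j} = 1$.

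To finish: since $\overline{abd_1} = 1$, the triple $abd_1$ lies in $X_1$, so $N(abd_1)$ is a valid nb-array. Restricted to columns $i$ and $j$, the $ab$-row reads $(0,0)$ while the $bd_1$-row reads $(1,1)$. A quick check via \Cref{lma2} (the $ab$-row already has sum $5$, forcing the row-sum multiset of $N(abd_1)$ to be $\{2,5,5\}$, and the $bd_1$-row has at least three known $1$s so its sum is $5$) shows the $ab$-row is at least as heavy as the $bd_1$-row. Hence \Cref{lma4}(1) applies with $r_1$ the $ab$-row and $r_2$ the $bd_1$-row, and forbids the $\begin{bmatrix} 0 & 0 \\ 1 & 1 \end{bmatrix}$ submatrix---contradiction, so $N$ must instead take form \eqref{nbarray2}.

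The one subtle point is the choice of $d_1$: the first \Cref{lma1} application requires $\overline{bcd_1} = 1$, so $d_1$ must lie in the common support of the $ab$- and $bc$-rows of $N$. Form \eqref{nbarray1} offers exactly two such columns, so this is no obstacle here; the same constraint explains why the argument cannot be transplanted to form \eqref{nbarray2}, where the $ac$-row carries a $1$ at one of the two columns where the $ab$-row is $0$, spoiling the second \Cref{lma1} computation and preventing the forbidden submatrix from materialising.
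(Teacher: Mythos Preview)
Your proof is correct and follows essentially the same strategy as the paper: assume form \eqref{nbarray1}, use \Cref{lma1} to force $\ol{bd_1i}=\ol{bd_1j}=1$, and then exhibit the forbidden $\begin{bmatrix}0&0\\1&1\end{bmatrix}$ submatrix in $N(abd_1)$. The only difference is that the paper reaches $\ol{bdi}=1$ in a single application of \Cref{lma1} (using the substitution giving $\ol{ac*}-\ol{bc*}=3(\ol{bdi}-\ol{adi})$ directly), whereas you route through $\ol{cd_1i}$ first; this is a minor detour but not a genuine change of approach.
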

\begin{proof}
Suppose that $N$ has the form \eqref{nbarray1}:
$$
\begin{matrix}
                    & 1 & 1 & 1 & 1 & 1 & 0 & 0 & \gets ab\text{-row}\\
    N : & 1 & 1 & 1 & 1 & 1 & 0 & 0 & \gets ac\text{-row}\\
                    & 1 & 1 & 0 & 0 & 0 & 0 & 0 & \gets bc\text{-row}\\
                    & \uparrow & \uparrow & \uparrow & \uparrow & \uparrow & \uparrow & \uparrow & \\
                    & d & e & f & g & h & i & j
\end{matrix}
$$
From \Cref{lma1} we have $1 = \frac{1}{3} (\ol{ac*} - \ol{bc*}) = \ol{bdi} - \ol{adi}$, so $\ol{bdi} = 1$. Similarly $\ol{bdj} = 1$.\\

Thus
$$
\begin{matrix}
                    & 1 & 1 & 1 & 1 & 1 & 0 & 0 & \gets ab\text{-row}\\
    N(abd) : & * & * & * & * & * & * & * & \gets ad\text{-row}\\
                    & * & * & * & * & * & 1 & 1 & \gets bd\text{-row}\\
                    & \uparrow & \uparrow & \uparrow & \uparrow & \uparrow & \uparrow & \uparrow & \\
                    & c & e & f & g & h & i & j
\end{matrix}
$$
where $*$ is either 0 or 1. But $abd \in X_1$ and this does not fit any nb-array from \Cref{lma3}, so we have a contradiction. (\Cref{lma4} also gives a contradiction.)  
\end{proof}

% 2. Case { 3, 3, 6 } 

\begin{lem}
    If $N$ has row sums $\{3,3,6\}$, then $N$ is of the form \eqref{nbarray3}.
\end{lem}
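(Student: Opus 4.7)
The plan is to argue by contradiction, assuming $N=N(abc)$ is of form \eqref{nbarray4}, and to exhibit an induced nb-array at an adjacent vertex of $X_1$ whose structure violates \Cref{lma4}, in direct analogy with the proof of the previous lemma. Label the seven columns of $N$ as $d,e,f,g,h,i,j$ so that the $ab$-row reads $(1,1,1,1,1,1,0)$ with the $0$ at column $j$, while both the $ac$-row and $bc$-row read $(1,1,1,0,0,0,0)$.

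The first step is to apply \Cref{lma1} in the form obtained by interchanging $a$ and $b$. Since $\ol{ab*}-\ol{bc*}=7-4=3$, for any distinct $x,y\in\{d,e,f\}$ the identity collapses to
\[
1 \;=\; \ol{abx}+\ol{aby}+\ol{cxy}-\ol{bcx}-\ol{bcy}-\ol{axy} \;=\; \ol{cxy}-\ol{axy},
\]
forcing $\ol{axy}=0$ (and $\ol{cxy}=1$). In particular $\ol{ade}=\ol{adf}=0$.

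Next, I would consider $N(acd)$. Its $ac$-row at columns $(b,e,f,g,h,i,j)$ equals $(1,1,1,0,0,0,0)$, read directly from $N(abc)$, and has sum $3$. Its $ad$-row at the same columns begins $(\ol{abd},\ol{ade},\ol{adf},\dots)=(1,0,0,\dots)$. The type 1 argument (applied to $\ol{ac*}\equiv 1\pmod 3$) restricts each row sum of $N(acd)$ to $\{0,3,6\}$, and a short counting check forces both the $ac$- and $ad$-row sums to equal $3$. The $2\times 2$ submatrix of $N(acd)$ formed by placing the $ad$-row on top of the $ac$-row and restricting to columns $e,f$ is then $\begin{bmatrix}0 & 0\\ 1 & 1\end{bmatrix}$, which is forbidden by \Cref{lma4}(1); this yields the desired contradiction.

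The main obstacle is selecting the correct symmetric instance of \Cref{lma1}: the obvious application with $\ol{ab*}-\ol{ac*}=3$ yields $\ol{bde}=\ol{bdf}=0$ instead, and the resulting nb-array $N(abd)$ does not directly violate \Cref{lma3} or \Cref{lma4} (both its $ad$- and $bd$-rows remain consistent with form \eqref{nbarray3} or \eqref{nbarray4}). One must swap $a$ and $b$ in the lemma and pivot through the third vertex $c$, studying $N(acd)$ rather than $N(abd)$, to close the case.
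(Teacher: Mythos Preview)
Your proof is correct and follows essentially the same route as the paper: assume form \eqref{nbarray4}, use \Cref{lma1} to deduce $\ol{ade}=\ol{adf}=0$, and then exhibit a contradiction at $N(acd)$. The only cosmetic differences are that the paper reaches $\ol{ade}=0$ in two steps (first $\ol{bde}=0$ from $\ol{ab*}-\ol{ac*}$, then $\ol{ade}=\ol{bde}$ from $\ol{ac*}-\ol{bc*}$) rather than your single $(a\leftrightarrow b)$-swapped application, and that the paper phrases the final contradiction as ``$N(acd)$ matches no pattern in \Cref{lma3}'' while you invoke \Cref{lma4}(1) directly---the paper in fact notes parenthetically that \Cref{lma4} works as well. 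Your closing remark that one \emph{must} pivot to $N(acd)$ rather than $N(abd)$ is a slight overstatement: once $\ol{bde}=0$ is known, the relation $\ol{ac*}=\ol{bc*}$ immediately gives $\ol{ade}=0$, and from there either $N(acd)$ or $N(bcd)$ (or, with both pieces, even $N(abd)$) yields the contradiction.
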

\begin{proof}
Suppose that $N$ has the form \eqref{nbarray4}:
$$
\begin{matrix}
                    & 1 & 1 & 1 & 1 & 1 & 1 & 0 & \gets ab\text{-row}\\
    N : & 1 & 1 & 1 & 0 & 0 & 0 & 0 & \gets ac\text{-row}\\
                    & 1 & 1 & 1 & 0 & 0 & 0 & 0 & \gets bc\text{-row}\\
                    & \uparrow & \uparrow & \uparrow & \uparrow & \uparrow & \uparrow & \uparrow & \\
                    & d & e & f & g & h & i & j
\end{matrix}
$$
Then from \Cref{lma1} we have $1 = \frac{1}{3}(\ol{ab*} - \ol{ac*}) = \ol{cde} - \ol{bde}$ so $\ol{cde} = 1$ and $\ol{bde} = 0$. Also $0 = \frac{1}{3}(\ol{ac*} - \ol{bc*}) = \ol{bde} - \ol{ade}$, so $\ol{ade} = 0$. For a similar reason $\ol{adf} = 0$.\\

Thus
$$
\begin{matrix}
                    & 1 & 1 & 1 & 0 & 0 & 0 & 0 & \gets ac\text{-row}\\
    N(acd) : & 1 & 0 & 0 & * & * & * & * & \gets ad\text{-row}\\
                    & * & * & * & * & * & * & * & \gets cd\text{-row}\\
                    & \uparrow & \uparrow & \uparrow & \uparrow & \uparrow & \uparrow & \uparrow & \\
                    & b & e & f & g & h & i & j
\end{matrix}
$$
But $acd \in X_1$ and this does not fit any nb-array from \Cref{lma3}, so we have a contradiction. (\Cref{lma4} also gives a contradiction.)
\end{proof}

% 3. Case { 0, 6, 6 } 

\begin{lem}
    If $N$ has row sums $\{0,6,6\}$, then $N$ is of the form \eqref{nbarray6}.
\end{lem}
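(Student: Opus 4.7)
The plan is to mirror the strategy of the preceding two lemmas: assume $N = N(abc)$ has the other candidate form \eqref{nbarray5} and derive a contradiction by examining the nb-array of a suitable neighbor of $abc$ in $X_1$.

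Label the columns of $N$ by $d,e,f,g,h,i,j$, so that $\ol{abx}=\ol{acx}=1$ for $x\in\{d,e,f,g,h,i\}$, $\ol{abj}=\ol{acj}=0$, and $\ol{bcx}=0$ for every $x$; in particular $\ol{ab*}=\ol{ac*}=7$ and $\ol{bc*}=1$. First I would apply \Cref{lma1} with the quintuple $(a,b,c,d,j)$ to obtain
$$ 0 \;=\; \tfrac{1}{3}\bigl(\ol{ab*}-\ol{ac*}\bigr) \;=\; \ol{cdj}-\ol{bdj}, $$
giving $\ol{bdj}=\ol{cdj}$, and then with $(b,a,c,d,j)$ to obtain
$$ 2 \;=\; \tfrac{1}{3}\bigl(\ol{ab*}-\ol{bc*}\bigr) \;=\; 1+\ol{cdj}-\ol{adj}, $$
which forces $\ol{cdj}-\ol{adj}=1$ and hence $\ol{adj}=0$, $\ol{cdj}=\ol{bdj}=1$.

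Since $\ol{abd}=1$, the triple $abd$ lies in $X_1$, and I would next write down $N(abd)$ with columns $c,e,f,g,h,i,j$. Its $ab$-row is $(\ol{abc},\ol{abe},\ldots,\ol{abj})=(1,1,1,1,1,1,0)$ of sum $6$, with its only $0$ in the $j$-column. By \Cref{lma2} the row-sum multiset of $N(abd)$ is $\{3,3,6\}$ or $\{0,6,6\}$, and the latter is ruled out because the $ad$-row contains $\ol{acd}=1$ while the $bd$-row contains $\ol{bdj}=1$, so neither row can be zero. Hence $N(abd)$ has row sums $\{3,3,6\}$, and by \Cref{lma3}(2) it must agree with \eqref{nbarray3} or \eqref{nbarray4} up to permutations of rows and columns. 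Both of those forms, however, possess a column equal to $(0,0,0)^{\top}$, whereas every column of $N(abd)$ carries a $1$: the $j$-column is $(0,\ol{adj},\ol{bdj})=(0,0,1)$, and every other column has a $1$ in the $ab$-row. This contradiction rules out \eqref{nbarray5}, so $N$ must be of the form \eqref{nbarray6}.

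The step that needs the most care is the choice of quintuples in \Cref{lma1}: applying the lemma to $(a,b,c,\cdot,\cdot)$ alone only yields relations between $\ol{bxy}$ and $\ol{cxy}$ (by the symmetry of the $ab$- and $ac$-rows), so to separate $\ol{cdj}$ from $\ol{adj}$ one must break the symmetry by swapping $a$ with $b$ in the second application. Once $\ol{bdj}=1$ is in hand, the final contradiction is an immediate inspection of \eqref{nbarray3} and \eqref{nbarray4}.
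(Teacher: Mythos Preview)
Your proof is correct and follows essentially the same approach as the paper: assume form \eqref{nbarray5}, use \Cref{lma1} to deduce $\ol{adj}=0$ and $\ol{bdj}=1$, and then observe that the partially determined $N(abd)$ cannot match any admissible shape from \Cref{lma3}. The only cosmetic differences are that the paper reaches $\ol{adj}=0$, $\ol{bdj}=1$ in a single application of \Cref{lma1} (via $\ol{ac*}-\ol{bc*}$) rather than two, and that you spell out the final contradiction explicitly via the ``no all-zero column'' observation, whereas the paper simply states that the partial array fits none of the listed forms.
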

\begin{proof}
Suppose that $N$ has the form \eqref{nbarray5}:
$$
\begin{matrix}
                    & 1 & 1 & 1 & 1 & 1 & 1 & 0 & \gets ab\text{-row}\\
    N : & 1 & 1 & 1 & 1 & 1 & 1 & 0 & \gets ac\text{-row}\\
                    & 0 & 0 & 0 & 0 & 0 & 0 & 0 & \gets bc\text{-row}\\
                    & \uparrow & \uparrow & \uparrow & \uparrow & \uparrow & \uparrow & \uparrow & \\
                    & d & e & f & g & h & i & j
\end{matrix}
$$
Then from \Cref{lma1}, $2 = \frac{1}{3} (\ol{ac*} - \ol{bc*}) = 1 + \ol{bdj} - \ol{adj}$, so $\ol{adj} = 0$ and $\ol{bdj}=1$.\\

Thus
$$
\begin{matrix}
                    & 1 & 1 & 1 & 1 & 1 & 1 & 0 & \gets ab\text{-row}\\
    N(abd) : & 1 & * & * & * & * & * & 0 & \gets ad\text{-row}\\
                    & 0 & * & * & * & * & * & 1 & \gets bd\text{-row}\\
                    & \uparrow & \uparrow & \uparrow & \uparrow & \uparrow & \uparrow & \uparrow & \\
                    & c & e & f & g & h & i & j
\end{matrix}
$$
Which does not fit any nb-array from \Cref{lma3} so we have a contradiction.
\end{proof}

% 4. Case { 4, 4, 4 } 

\begin{lem}
    If $N$ has row sums $\{4,4,4\}$, then $N$ is of the form \eqref{nbarray10}.
\end{lem}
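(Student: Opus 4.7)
The plan is to rule out each of the three non-target forms \eqref{nbarray7}, \eqref{nbarray8}, and \eqref{nbarray9} separately, in each case using \Cref{lma1} together with the observation that, since $\ol{ab*} = 5 \equiv 2 \pmod 3$, the part $X_1$ has type $2$, so every sum $\ol{xy*}$ must satisfy $\ol{xy*} \equiv 2 \pmod 3$.

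For form \eqref{nbarray9} the argument is shortest. Label the columns so that column $g$ carries the pattern $(1,0,0)$ (i.e.\ $\ol{abg}=1$ and $\ol{acg}=\ol{bcg}=0$). Applying \Cref{lma1} to $\ol{ab*}-\ol{ac*}=0$ and to $\ol{ab*}-\ol{bc*}=0$ with $x=g$ and each $y \in \{d,e,f,h,i,j\}$ forces $\ol{cgy}=0$ for every such $y$. Combined with $\ol{cga}=\ol{cgb}=0$, this gives $\ol{cg*}=0$, contradicting $\ol{cg*}\equiv 2 \pmod 3$.

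For form \eqref{nbarray8}, where the $ac$- and $bc$-rows coincide, an analogous application of \Cref{lma1} determines seven of the eight entries of the $ag$-row: we obtain $\ol{agb}=\ol{agd}=\ol{age}=\ol{agf}=\ol{agi}=\ol{agj}=1$ and $\ol{agc}=0$, while $\ol{agh}$ is left free in $\{0,1\}$. Hence $\ol{ag*}=6+\ol{agh}\in\{6,7\}$, and neither value is $\equiv 2 \pmod 3$, a contradiction.

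Form \eqref{nbarray7} is the main obstacle, precisely because the three rows of $N$ coincide and no single pair-sum yields a mod-$3$ violation on its own. The plan is first to use \Cref{lma1} (with the three rows of $N$ interchangeable) to show that $\ol{xyz}$ depends only on the orbit-pattern of $\{x,y,z\}$ under the tripartition $\{a,b,c\} \sqcup \{d,e,f,g\} \sqcup \{h,i,j\}$. The mod-$3$ congruences for the row sums $\ol{ah*}$ and $\ol{dh*}$ then force the orbit-values $\ol{ahi}$ and $\ol{dhi}$ to equal $1$. At that point every entry of the nb-array $N(hij)$ equals $1$, so $hij$ would have all $21$ of its neighbors in $X_1$, which is incompatible with the quotient matrix, which allows at most $12$. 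This contradiction eliminates form \eqref{nbarray7}, leaving \eqref{nbarray10} as the only possibility.
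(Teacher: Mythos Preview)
Your treatments of forms \eqref{nbarray8} and \eqref{nbarray9} are correct. They differ from the paper's approach---which inspects auxiliary nb-arrays $N(acd)$, $N(abg)$ and checks them against the list in \Cref{lma3}---but your direct mod-$3$ arguments on $\ol{cg*}$ and $\ol{ag*}$ are a clean alternative. (For \eqref{nbarray8} you actually need two applications of \Cref{lma1}: first $\ol{ac*}-\ol{bc*}=0$ to get $\ol{agy}=\ol{bgy}$, then $\ol{ab*}-\ol{ac*}=0$ to pin down the $\ol{bgy}$'s; but this is routine.)

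The argument for form \eqref{nbarray7}, however, has a genuine gap: the orbit-invariance claim is not established, and in fact it is false as a consequence of \Cref{lma1} alone. From the three equal rows of $N$ you correctly get $\ol{axy}=\ol{bxy}=\ol{cxy}$ for all $x,y\notin\{a,b,c\}$, i.e.\ invariance under permutations of $\{a,b,c\}$. But invariance under permutations of $\{d,e,f,g\}$ and of $\{h,i,j\}$ does \emph{not} follow. Concretely, look at $N(abh)$: since $abh\in X_2$, its nb-array has total sum $9$; the $ab$-row has sum $5$, so the two identical rows $ah$ and $bh$ each have sum $2$. Now \Cref{lma4} (parts 1 and 2, with row-sum gap $3$) applied to the columns $i,j$ (where the $ab$-row reads $0,0$) forces $\ol{ahi}=\ol{ahj}=0$, and hence exactly two of $\ol{adh},\ol{aeh},\ol{afh},\ol{agh}$ are $1$ and two are $0$. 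So triples of the ``same orbit type'' $(1,1,1)$ carry different values, and your computation $\ol{ahi}=\gamma=1$ is contradicted.

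The paper exploits exactly this: having first eliminated \eqref{nbarray8}, it observes (as above) that some $\ol{adh}=1$, and then shows that $N(abd)$ is forced into the already-excluded form \eqref{nbarray8}. You could repair your argument for \eqref{nbarray7} by following this route, or by any other device that does not presuppose the unproved orbit symmetry.
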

\begin{proof}
First we eliminate \eqref{nbarray8}. Suppose that $N$ has the form \eqref{nbarray8}:
$$
\begin{matrix}
                    & 1 & 1 & 1 & 1 & 0 & 0 & 0 & \gets ab\text{-row}\\
    N : & 1 & 1 & 1 & 0 & 1 & 0 & 0 & \gets ac\text{-row}\\
                    & 1 & 1 & 1 & 0 & 1 & 0 & 0 & \gets bc\text{-row}\\
                    & \uparrow & \uparrow & \uparrow & \uparrow & \uparrow & \uparrow & \uparrow & \\
                    & d & e & f & g & h & i & j
\end{matrix}
$$
Then from \Cref{lma1} we have $0 = \frac{1}{3} (\ol{ab*} - \ol{ac*}) = 1 + \ol{cdg} - \ol{bdg}$ so $\ol{bdg} = 1$ and $\ol{cdg} = 0$. Also $0 = \frac{1}{3} (\ol{ac*} - \ol{bc*}) = \ol{bdg} - \ol{adg}$, so $\ol{adg} = 1$. Furthermore $0 = \frac{1}{3} (\ol{ab*} - \ol{ac*}) = -1 + \ol{cdh} - \ol{bdh}$ so $\ol{cdh} = 1$ and $\ol{bdh}= 0$. And $0 = \frac{1}{3} (\ol{ac*} - \ol{bc*}) = \ol{bdh} - \ol{adh}$ so $\ol{adh} = 0$. Lastly $0 = \frac{1}{3} (\ol{ab*} - \ol{bc*}) = \ol{cde} - \ol{ade} = \ol{cdf} - \ol{adf}$ so $\ol{cde} = \ol{ade}$ and $\ol{cdf} = \ol{adf}$\\

Thus
$$
\begin{matrix}
                    & 1 & 1 & 1 & 0 & 1 & 0 & 0 & \gets ac\text{-row}\\
    N(acd) : & * & \alpha_1 & \alpha_2 & 1 & 1 & * & * & \gets ad\text{-row}\\
                    & * & \alpha_1 & \alpha_2 & 0 & 0 & * & * & \gets cd\text{-row}\\
                    & \uparrow & \uparrow & \uparrow & \uparrow & \uparrow & \uparrow & \uparrow & \\
                    & b & e & f & g & h & i & j
\end{matrix}
$$
for some $\alpha_1, \alpha_2 \in \{0,1\}$. And since $acd \in X_1$ and this does not match any pattern from \Cref{lma3}, we have a contradiction.\\

Now we eliminate the case \eqref{nbarray7}.
Suppose that $N$ has the form \eqref{nbarray7}:
$$
\begin{matrix}
                    & 1 & 1 & 1 & 1 & 0 & 0 & 0 & \gets ab\text{-row}\\
    N : & 1 & 1 & 1 & 1 & 0 & 0 & 0 & \gets ac\text{-row}\\
                    & 1 & 1 & 1 & 1 & 0 & 0 & 0 & \gets bc\text{-row}\\
                    & \uparrow & \uparrow & \uparrow & \uparrow & \uparrow & \uparrow & \uparrow & \\
                    & d & e & f & g & h & i & j
\end{matrix}
$$
Then from \Cref{lma1} it follows that for distinct $x,y \in \{d, e, f, g, h, i, j\}$, $\ol{axy} = \ol{bxy} = \ol{cxy}$.\\

Thus
$$
\begin{matrix}
                    & 1 & 1 & 1 & 1 & 1 & 0 & 0 & \gets ab\text{-row}\\
    N(abh) : & 0 & \alpha_1 & \alpha_2 & \alpha_3 & \alpha_4 & \alpha_5 & \alpha_6 & \gets ah\text{-row}\\
                    & 0 & \alpha_1 & \alpha_2 & \alpha_3 & \alpha_4 & \alpha_5 & \alpha_6 & \gets bh\text{-row}\\
                    & \uparrow & \uparrow & \uparrow & \uparrow & \uparrow & \uparrow & \uparrow & \\
                    & c & d & e & f & g & i & j
\end{matrix}
$$
for some $\alpha_1, \dots, \alpha_6 \in \{0,1\}$. Since $abh \in X_2$ the sum of $N(abh)$ must be 9, and from \Cref{lma1} the difference between two row sums must be a multiple of 3. Thus the row sums of $N(abh)$ are $\{2,2,5\}$, and from \Cref{lma4} we must have $\alpha_5 = 0$ and $\alpha_6 = 0$. Also, two of $\alpha_1, \dots, \alpha_4$ must be 1. Without loss of generality say $\alpha_1 = 1$, so $\ol{adh} = \ol{bdh} = 1$.\\

Then 
$$
\begin{matrix}
                    & 1 & 1 & 1 & 1 & 0 & 0 & 0 & \gets ab\text{-row}\\
    N(abd) : & 1 & \beta_1 & \beta_2 & \beta_3 & 1 & \beta_4 & \beta_5 & \gets ah\text{-row}\\
                    & 1 & \beta_1 & \beta_2 & \beta_3 & 1 & \beta_4 & \beta_5 & \gets bh\text{-row}\\
                    & \uparrow & \uparrow & \uparrow & \uparrow & \uparrow & \uparrow & \uparrow & \\
                    & c & e & f & g & h & i & j
\end{matrix}
$$
where $\beta_1, \dots, \beta_5 \in \{0,1\}$. But since $abd \in X_1$, $N(abd)$ must have the form \eqref{nbarray8} that we eliminated before. So we have a contradiction.\\

Finally, we eliminate the case \eqref{nbarray9}. Suppose that $N$ has the form \eqref{nbarray9}:
$$
\begin{matrix}
                    & 1 & 1 & 1 & 1 & 0 & 0 & 0 & \gets ab\text{-row}\\
    N : & 1 & 1 & 1 & 0 & 1 & 0 & 0 & \gets ac\text{-row}\\
                    & 1 & 1 & 1 & 0 & 0 & 1 & 0 & \gets bc\text{-row}\\
                    & \uparrow & \uparrow & \uparrow & \uparrow & \uparrow & \uparrow & \uparrow & \\
                    & d & e & f & g & h & i & j
\end{matrix}
$$
Then from \Cref{lma1} we have $0=\frac{1}{3} (\ol{ab*}-\ol{bc*})=1+\ol{cgh}-\ol{agh}=1+\ol{cgj}-\ol{agj}$, so $\ol{agh} = 1$ and $\ol{agj} = 1$. Also $0=\frac{1}{3} (\ol{ab*}-\ol{ac*})= 1 + \ol{cdg} - \ol{bdg}$, so $\ol{bdg} = 1$.\\

Thus 
$$
\begin{matrix}
                    & 1 & 1 & 1 & 1 & 0 & 0 & 0 & \gets ab\text{-row}\\
    N(abg) : & 1 & * & * & * & 1 & * & 1 & \gets ag\text{-row}\\
                    & 1 & 1 & * & * & * & * & * & \gets bg\text{-row}\\
                    & \uparrow & \uparrow & \uparrow & \uparrow & \uparrow & \uparrow & \uparrow & \\
                    & c & d & e & f & h & i & j
\end{matrix}
$$
which, since $abg \in X_1$, gives a contradiction.
\end{proof}

Now only five cases remain from \Cref{lma3}.

\begin{proposition} \label{prop1}
    We have the following.
    \begin{enumerate}
        \item If $X_1$ is of type 0, then for any $abc \in X_1$, $N(abc)$ has the form \eqref{nbarray2}.
        \item If $X_1$ is of type 1, then for any $abc \in X_1$, $N(abc)$ has the form \eqref{nbarray3} or \eqref{nbarray6}.
        \item If $X_1$ is of type 2, then for any $abc \in X_1$, $N(abc)$ has the form \eqref{nbarray10} or \eqref{nbarray11}.\\
    \end{enumerate}
\end{proposition}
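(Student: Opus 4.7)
The plan is essentially direct bookkeeping, combining the row-sum-by-type classification already proved with the four elimination lemmas just established and Lemma~3(5). The lemma constraining row sums by type says: for $abc \in X_1$, the multiset of row sums of $N(abc)$ is $\{2,5,5\}$ when $X_1$ has type 0; either $\{3,3,6\}$ or $\{0,6,6\}$ when $X_1$ has type 1; and either $\{4,4,4\}$ or $\{1,4,7\}$ when $X_1$ has type 2.

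For each of these multisets, the preceding elimination lemmas have narrowed the list of candidate nb-arrays from Lemma~3 down to a single survivor: \eqref{nbarray2} for $\{2,5,5\}$, \eqref{nbarray3} for $\{3,3,6\}$, \eqref{nbarray6} for $\{0,6,6\}$, and \eqref{nbarray10} for $\{4,4,4\}$. The remaining profile $\{1,4,7\}$ already has only one candidate, \eqref{nbarray11}, listed in Lemma~3(5), so no further elimination is needed there.

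Matching these two lists under each type of $X_1$ then produces exactly the three cases stated in the proposition: type~0 forces \eqref{nbarray2}; type~1 forces \eqref{nbarray3} or \eqref{nbarray6}; type~2 forces \eqref{nbarray10} or \eqref{nbarray11}. Since all the analytical content resides in the preceding lemmas, no substantive obstacle is anticipated at this step. I would write the proof as a short three-item case split on the type of $X_1$, chaining the row-sum classification with the appropriate elimination lemma in each case.
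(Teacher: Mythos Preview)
Your proposal is correct and matches the paper's approach exactly: the paper does not even supply a separate proof for this proposition, treating it as an immediate consequence of the row-sum-by-type lemma, Lemma~\ref{lma3}, and the four elimination lemmas (noting ``Now only five cases remain from \Cref{lma3}'' just before the statement). Your write-up would simply make explicit what the paper leaves implicit.
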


These cases occur in the construction depicted at the beginning of the section.\\

For the first perfect $2$-coloring $\{P_1, P_2\}$, $P_1$ is of type 0 and $P_2$ is of type 2. When taking $X_1 = P_1$, all of its vertices have an nb-array of the form \eqref{nbarray2}. When taking $X_1 = P_2$, vertices from $E$ and $G$ have an nb-array of the form \eqref{nbarray10}, while vertices from $D$ and $F$ have an nb-array of the form \eqref{nbarray11}.\\

For the second perfect $2$-coloring $\{P_1', P_2'\}$, $P_1'$ and $P_2'$ are both of type 1. When taking $X_1 = P_1'$, vertices from $C$, $D$ and $H$ have an nb-array of the form \eqref{nbarray3} and those from $G$ have the form \eqref{nbarray6}. When taking $X_1 = P_2'$, vertices from $A$, $B$ and $F$ have an nb-array of the form \eqref{nbarray3} and those from $E$ have the form \eqref{nbarray6}.\\

\begin{theorem} \label{thm1}
    There is only one perfect $2$-coloring with a part of type 0, which is also the only perfect $2$-coloring with a part of type 2, up to isomorphism.
\end{theorem}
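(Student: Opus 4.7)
Since $\ol{ab*}$ and its counterpart in $X_2$ sum to $8 \equiv 2 \pmod 3$, a part of type $0$ is always paired with a part of type $2$ in the same $2$-coloring, so the two halves of the theorem describe the same set of colorings; it suffices to prove uniqueness under the assumption that $X_1$ is of type $0$. By \Cref{prop1}, every $abc \in X_1$ then has $N(abc)$ of the form \eqref{nbarray2}, with row-sum multiset $\{2,5,5\}$.

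The plan is to normalize one vertex of $X_1$ using the action of $S_{10}$ on $J(10,3)$, and then to propagate the constraints of \Cref{prop1} and \Cref{lma1} to recover the entire coloring. Fix $abc \in X_1$ and relabel so that $abc = 123$ and $N(123)$ is exactly the matrix of \eqref{nbarray2}, with the $\{2,3\}$-row playing the role of the rare row and the columns indexed by $4,5,\dots,10$; this records twelve vertices of $X_1$ explicitly. Each such neighbor $uvw \in X_1$ must in turn have $N(uvw)$ of the form \eqref{nbarray2}, and \Cref{lma1} links its row sums and entries to the already-known entries of $N(123)$. For example, the vertex $234 \in X_1$ has its $\{2,3\}$-row (entries $\ol{23x}$, $x \in \{1,5,6,\dots,10\}$) of sum $2$, so that row must be the rare row of $N(234)$; this fixes its row-sum multiset and, via \Cref{lma1}, restricts almost all entries $\ol{24x}$, $\ol{34x}$. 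Iterating through the neighbors of each newly-determined vertex in $X_1$, every round rules out all but one possibility for a growing collection of entries $\ol{xyz}$, until every entry is forced.

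To organize the bookkeeping I would extract two intermediate structural facts. The counting identity $\sum_{\{r,s\}} \ol{rs*} = 3|X_1| = 180$ together with $\ol{rs*} \in \{0,3,6\}$ (a consequence of type $0$ and $\ol{rs*} \le 8$) yields exactly $5$ pairs with $\ol{rs*} = 0$, $20$ with $\ol{rs*} = 3$, and $20$ with $\ol{rs*} = 6$. A first structural claim is that the five ``empty'' pairs form a perfect matching $M$ on $[10]$; if two of them shared a vertex $a$, then any $abc \in X_1$ one could construct with $c$ outside those empty pairs would force $N(abc)$ to violate \eqref{nbarray2} via \Cref{lma1}. A second claim is that the $20$ ``common'' pairs together with $M$ determine a Hamiltonian cycle $\G$ on $[10]$ in which $M$ is precisely the set of five antipodal pairs; this is extracted by examining how the two heavy rows of $N(rst)$ behave as $rst$ ranges over $X_1$. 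Once $\G$ is identified with the $10$-cycle of Section 4, the set $X_1$ is forced to coincide with $A \cup B \cup C \cup H = P_1$, so $\{X_1, X_2\} \cong \{P_1, P_2\}$.

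The main obstacle is the second structural claim. Reconstructing the cycle $\G$ from the rare/common-pair data and the local constraints \eqref{nbarray2} requires a careful case split, because when propagating to a new vertex $uvw \in X_1$ the normalization freedom for $N(uvw)$ in isolation is partially consumed by the entries already fixed from $N(123)$, and one must track which entries are genuinely determined rather than only locally permutable. The analysis is elementary but lengthy, in the same style as the case-by-case arguments in the preceding lemmas that eliminated the spurious nb-arrays.
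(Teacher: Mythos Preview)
Your route is genuinely different from the paper's. After the same opening observation that types $0$ and $2$ must be paired, the paper makes no attempt at a structural argument: it fixes one nb-array of the form \eqref{nbarray2}, treats the $120$ unknowns $\ol{xyz}$ as variables in a polynomial ring over $\mathbb{Q}$, throws in the relations from \Cref{lma1} together with the Boolean constraints $\ol{xyz}^2-\ol{xyz}$, and lets magma test ideal membership. This pins down $96$ of the $120$ values outright; the remaining $24$ fall into two blocks of $12$ with constant value on each block, and the two resulting colorings are shown isomorphic via the transposition $(f\,g)$. Your plan instead tries to reconstruct the underlying $10$-cycle from the pair statistics $\ol{rs*}$ and then identify $X_1$ with $P_1$ directly. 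That would be more explanatory if it can be completed, but as you acknowledge, the second structural claim is where all the work hides, and you have not actually carried it out.

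Two points to tighten in your outline. First, the count ``exactly $5$ pairs with $\ol{rs*}=0$, $20$ with $3$, $20$ with $6$'' does not follow from $\sum_{\{r,s\}}\ol{rs*}=180$ and $\ol{rs*}\in\{0,3,6\}$ alone: those two linear conditions leave a one-parameter family $(n_0,n_3,n_6)=(n_6-15,\,60-2n_6,\,n_6)$. You need to also use that every $abc\in X_1$ has pair-value multiset $\{3,6,6\}$ (from the row sums $\{2,5,5\}$), so that double-counting incidences (element of $X_1$, contained pair of value $6$) gives $6n_6=2\cdot 60$, hence $n_6=20$. Second, in the target coloring the $20$ value-$6$ pairs are precisely those at cycle-distance $1$ or $3$, and the map $i\mapsto 3i\pmod{10}$ is an automorphism of that $4$-regular graph (and of $M$) which swaps the distance-$1$ cycle with the distance-$3$ cycle while preserving $P_1$; so ``the'' Hamiltonian cycle $\G$ you hope to recover is only determined up to this ambiguity. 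This does not kill the argument, but your second structural claim should be stated accordingly.
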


\begin{proof}
Denote $t_1$ the type of $X_1$ and $t_2$ the type of $X_2$. Since $|ab* \cap X_1| + |ab* \cap X_2| = |ab*| = 8$, we have $t_1 + t_2 \equiv 2 \pmod 3$. So if $X_1$ is of type 0 then $X_2$ is of type 2, and vice versa. Therefore, it is enough to show that the perfect $2$-coloring with a part of type 0 is unique.\\

Suppose that $X_1$ is of type 0, and fix $abc \in X_1$. Then from \Cref{prop1}
$$
\begin{matrix}
                    & 1 & 1 & 1 & 1 & 1 & 0 & 0 & \gets ab\text{-row}\\
    N(abc) : & 1 & 1 & 1 & 1 & 0 & 1 & 0 & \gets ac\text{-row}\\
                    & 1 & 1 & 0 & 0 & 0 & 0 & 0 & \gets bc\text{-row}\\
                    & \uparrow & \uparrow & \uparrow & \uparrow & \uparrow & \uparrow & \uparrow & \\
                    & d & e & f & g & h & i & j
\end{matrix}
$$

Since the rest of the proof relies on many applications of \Cref{lma1} and is quite fastidious, we will leave the verification to a computer.\\

We consider the 120 values $\ol{xyz}$ ($ xyz \in J(10,3) $) as variables for multivariate polynomials. The variables having $\{0,1\}$ values translates to $\ol{xyz}^2-\ol{xyz}=0$. \Cref{lma1} and the values fixed in $N(abc)$ also give some multivariate polynomials that must have value 0. The ideal generated by these polynomials can be computed by magma.\\

We can then check if $\ol{xyz} = \epsilon$ ($\epsilon \in \{0,1\}$) can be deduced by checking if $\ol{xyz} - \epsilon$ belongs to the ideal. Moreover, we can check if $\ol{xyz} = \ol{x'y'z'}$ or $\ol{xyz} \neq \ol{x'y'z'}$ ($ xyz $, $x'y'z' \in J(10,3) $) can be deduced by checking if $\ol{xyz} - \ol{x'y'z'}$ or $\ol{xyz} + \ol{x'y'z'} -1$ belongs to the ideal.\\

In this way, 48 of the $\ol{xyz}$ values are deduced to be 1, and 48 of them are 0. The remaining ones are separated into two groups $U_1$ and $U_2$ of size 12, with identical value within a group. Since $|X_1|=|X_2|=60$, there are two possibilities for the coloring $(X_1,X_2)$, either the values in the group $U_1$ are 0 and those in the group $U_2$ are 1, or the opposite. But it is computed in the magma code that the transposition $(f,g)$ is an isomorphism between those two possible colorings.\\

\begin{verbatim}
J103:=Setseq(Subsets({1..10},3));
R:=PolynomialRing(Rationals(),#J103);
zo:={ R.i^2-R.i : i in {1..#J103} };
pos:=func< a,b,c | R.Position(J103,{a,b,c}) >;
abstar:=func< a,b | &+[ pos(a,b,x) : x in {1..10} | not x in {a,b} ] >;
lem1:=func< a,b,c,d,e | abstar(a,b)-abstar(a,c)-3*(
 pos(a,b,d)+pos(a,b,e)+pos(c,d,e)-pos(a,c,d)-pos(a,c,e)-pos(b,d,e) ) >;
lem1s:={ lem1(a,b,c,d,e) : a,b,c,d,e in {1..10} | #{a,b,c,d,e} eq 5 };
prop12:={ pos(xyz[1],xyz[2],xyz[3])-1 : xyz in 
  { [1,2,3],[1,2,4],[1,2,5],[1,2,6],[1,2,7],[1,2,8],
   [1,3,4],[1,3,5],[1,3,6],[1,3,7],[1,3,9],
   [2,3,4],[2,3,5] } } join 
 { pos(xyz[1],xyz[2],xyz[3]) : xyz in 
  { [1,2,9],[1,2,10],[1,3,8],[1,3,10],[2,3,6],[2,3,7],[2,3,8],[2,3,9],
   [2,3,10] } };
I:=ideal< R | zo join lem1s join prop12 >;

known1:={ {a,b,c} : a,b,c in {1..10} | #{a,b,c} eq 3 and pos(a,b,c)-1 in I };
#known1 eq 48;
known0:={ {a,b,c} : a,b,c in {1..10} | #{a,b,c} eq 3 and pos(a,b,c) in I };
#known0 eq 48;

pos(1,4,6)+pos(1,4,7)-1 in I;

unknown1:={ {a,b,c} : a,b,c in {1..10} | #{a,b,c} eq 3 and
 pos(a,b,c)-pos(1,4,6) in I};
unknown2:={ {a,b,c} : a,b,c in {1..10} | #{a,b,c} eq 3 and
 pos(a,b,c)-pos(1,4,7) in I};
#unknown1 eq 12;
#unknown2 eq 12;
#(known0 join known1 join unknown1 join unknown2) eq 120;

t:=Sym(10)!(6,7);
G:=GSet(Sym(10),Subsets({1..10},3));
{Image(t,G,X) : X in known0} eq known0;
{Image(t,G,X) : X in known1} eq known1;
{Image(t,G,X) : X in unknown1} eq unknown2;
\end{verbatim}

\end{proof}

\begin{theorem}  \label{thm2}
    There is only one perfect $2$-coloring with a part of type 1 up to isomorphism.
\end{theorem}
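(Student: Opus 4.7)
The plan is to mirror the strategy of \Cref{thm1}, reducing the problem to a polynomial ideal computation verified by magma. Since $t_1 + t_2 \equiv 2 \pmod 3$ and both types must then be $1$, both parts of the coloring are of type 1. By \Cref{prop1}, for each $abc \in X_1$ the nb-array $N(abc)$ takes either form \eqref{nbarray3} or form \eqref{nbarray6}. This introduces an initial case split on the shape of the chosen starting vertex, unlike the single case handled in \Cref{thm1}.

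First I would fix an arbitrary $abc \in X_1$ and attempt to reduce to the case where $N(abc)$ has form \eqref{nbarray3}, by showing that $X_1$ must contain at least one such vertex. The known construction $\{P'_1, P'_2\}$ supports this reduction: in $P'_1$ the orbits $C$, $D$, $H$ all exhibit form \eqref{nbarray3} while only orbit $G$ exhibits form \eqref{nbarray6}. If this reduction fails, namely if $X_1$ can consist entirely of vertices of form \eqref{nbarray6}, that sub-case must be handled separately, for instance by exploiting the fact that a zero row in every nb-array would force a very rigid local structure via \Cref{lma1}, which can be shown to be incompatible with $|X_1|=60$.

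With $N(abc)$ of form \eqref{nbarray3}, I would set up in magma the polynomial ring $R = \mathbb{Q}[\ol{xyz} : xyz \in J(10,3)]$ and the ideal $I \subseteq R$ generated by three families: the $120$ idempotence relations $\ol{xyz}^2 - \ol{xyz}$; all instances of \Cref{lma1}, one per ordered $5$-tuple of distinct elements of $[10]$; and the relations fixing $\ol{abc}=1$ together with the values $\ol{xyz}$ for $xyz \in N(abc)$ prescribed by form \eqref{nbarray3}. Exactly as in the proof of \Cref{thm1}, I would then check for each variable $\ol{xyz}$ whether $\ol{xyz}$ or $\ol{xyz}-1$ lies in $I$, so as to determine as many values as possible. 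Any remaining undetermined variables would be grouped by the equivalence $\ol{xyz} \sim \ol{x'y'z'} \iff \ol{xyz}-\ol{x'y'z'} \in I$, and the alternative colorings corresponding to the consistent assignments on these classes would be identified via explicit automorphisms of $J(10,3)$, in the same way the transposition $(f,g)$ is used in \Cref{thm1}.

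The main obstacle is twofold. First, the initial constraints from form \eqref{nbarray3} are weaker than those of form \eqref{nbarray2} used in \Cref{thm1}, so the ideal $I$ may leave more variables undetermined and require a more intricate symmetry argument, possibly involving several generators of $\Aut(J(10,3))$, to match the remaining alternatives. Second, ruling out (or absorbing) the all-form-\eqref{nbarray6} sub-case is likely to be the most delicate step, since it lies outside the magma routine and demands a direct combinatorial argument; the numerical evidence from $\{P'_1,P'_2\}$ suggests this sub-case is in fact vacuous for any type-1 coloring. Once both hurdles are cleared, uniqueness of the type-1 perfect $2$-coloring up to isomorphism follows.
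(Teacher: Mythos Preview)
Your proposal is correct and follows essentially the same route as the paper: reduce to a vertex $abc\in X_1$ with $N(abc)$ of form \eqref{nbarray3}, then run an ideal-membership computation identical in structure to that of \Cref{thm1}. The one place you diverge is in expecting that the form-\eqref{nbarray6} reduction ``lies outside the magma routine and demands a direct combinatorial argument.'' The paper in fact handles it inside the same framework: it builds the ideal $I'$ from the form-\eqref{nbarray6} initial data for $N(abc)$ and then checks by membership that $\ol{cd*}=4$, $\ol{dj*}=4$, $\ol{cj*}=7$ (with $c=3$, $d=4$, $j=10$ in the code). Since $4+4+7=15$, the vertex $cdj$ cannot lie in $X_2$ (its nb-array would sum to $15\neq 9$), so $cdj\in X_1$ with row sums $\{3,3,6\}$, i.e.\ form \eqref{nbarray3}. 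Thus the step you flagged as ``most delicate'' is no harder than the main computation, and your anticipated combinatorial detour is unnecessary. The form-\eqref{nbarray3} computation then behaves exactly as you predict: $48$ values forced to $1$, $48$ to $0$, and two residual classes of size $12$ exchanged by a single transposition (here $(8,9)$ rather than $(6,7)$).
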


\begin{proof}
Suppose that $X_1$ is of type 1 and fix $abc \in X_1$. Then from \Cref{prop1}, $N(abc)$ has the form \eqref{nbarray3} or \eqref{nbarray6}. If $N(abc)$ has the form \eqref{nbarray6}, then the second part of the magma code shows that there exists an element of $X_1$ which has an nb-array of the form \eqref{nbarray3}. The case \eqref{nbarray3} is very similar to the proof of \Cref{prop1} so we will give an almost identical magma code.\\

\begin{verbatim}
// Case (3)

prop14:={ pos(xyz[1],xyz[2],xyz[3])-1 : xyz in 
  { [1,2,3],[1,2,4],[1,2,5],[1,2,6],[1,2,7],[1,2,8],[1,2,9],
   [1,3,4],[1,3,5],[1,3,6],
   [2,3,4],[2,3,5],[2,3,7] } } join 
 { pos(xyz[1],xyz[2],xyz[3]) : xyz in 
  { [1,2,10],
   [1,3,7],[1,3,8],[1,3,9],[1,3,10],
   [2,3,6],[2,3,8],[2,3,9],[2,3,10] } };
I:=ideal< R | zo join lem1s join prop14 >;

known1:={ {a,b,c} : a,b,c in {1..10} | #{a,b,c} eq 3 and pos(a,b,c)-1 in I };
#known1 eq 48;
known0:={ {a,b,c} : a,b,c in {1..10} | #{a,b,c} eq 3 and pos(a,b,c) in I };
#known0 eq 48;

pos(1,5,8)+pos(1,5,9)-1 in I;

unknown1:={ {a,b,c} : a,b,c in {1..10} | #{a,b,c} eq 3 and
 pos(a,b,c)-pos(1,5,8) in I};
unknown2:={ {a,b,c} : a,b,c in {1..10} | #{a,b,c} eq 3 and
 pos(a,b,c)-pos(1,5,9) in I};
#unknown1 eq 12;
#unknown2 eq 12;
#(known0 join known1 join unknown1 join unknown2) eq 120;

t:=Sym(10)!(8,9);
G:=GSet(Sym(10),Subsets({1..10},3));
{Image(t,G,X) : X in known0} eq known0;
{Image(t,G,X) : X in known1} eq known1;
{Image(t,G,X) : X in unknown1} eq unknown2;

// Case (6)

prop14prime:={ pos(xyz[1],xyz[2],xyz[3])-1 : xyz in 
  { [1,2,3],[1,2,4],[1,2,5],[1,2,6],[1,2,7],[1,2,8],[1,2,9],
   [1,3,4],[1,3,5],[1,3,6],[1,3,7],[1,3,8],[1,3,10] } } join 
 { pos(xyz[1],xyz[2],xyz[3]) : xyz in 
  { [1,2,10],
   [1,3,9],
   [2,3,4],[2,3,5],[2,3,6],[2,3,7],[2,3,8],[2,3,9],[2,3,10] } };
Iprime:=ideal< R | zo join lem1s join prop14prime >;

abstar(3,4)-4 in Iprime;
abstar(4,10)-4 in Iprime;
abstar(3,10)-7 in Iprime;

\end{verbatim}
\end{proof}

\section{Conclusion}

By a proof similar to \Cref{thm1}, we can see that a perfect $2$-coloring with a part of type 1 must have the other part also of type 1. Thus, these last two theorems show that there are only two symmetric perfect $2$-colorings of $J(10,3)$ associated to $\theta_2$. One of them has parts of type 0 and 2, and the other one has both parts of type 1. \Cref{thm2} also implies that the two parts of the perfect $2$-coloring with parts of type 1 are isomorphic to each other.

\section{Acknowledgement}

I am grateful to Prof. Akihiro Munemasa for his continuous guidance through this study, and his idea to consider an ideal membership problem for the resolutions by computer.\\

I am also thankful to Prof. Alexander Gavrilyuk and Prof. Sergey Goryainov for their insight on the known constructions of symmetric perfect $2$-colorings of $J(10,3)$, and for indicating me that the problem resolved in this paper was still open.\\

This work was supported by JST SPRING (Grant Number JPMJSP2114).


\begin{thebibliography}{}
\bibitem{1} Alexander L. Gavrilyuk, Sergey V. Goryainov. On perfect $2$-colorings of Johnson graphs $J(v, 3)$. 2012. J. Combin. Designs 21: 232–252, 2013.
\bibitem{2} S. V. Avgustinovich, I. Yu. Mogil’nykh. Perfect colorings of the Johnson graphs $J(8, 3)$ and $J(8, 4)$ with two Colors. J. Appl. Ind. Math. 5, 19–30 (2011).
\bibitem{3} R.J. Evans, A.L. Gavrilyuk, S. Goryainov, K. Vorob’ev. Equitable 2-partitions of the Johnson graphs $J(v, 3)$   . 2022. arXiv:2206.15341v2 [math.CO].
\bibitem{4} Konstantin Vorob’ev. Equitable 2-partitions of Johnson graphs with the second eigenvalue. 2020. arXiv:2003.10956v1 [math.CO].
\bibitem{5} Avgustinovich, S., Mogilnykh, I. (2008). Perfect $2$-Colorings of Johnson Graphs $J(6,3)$ and $J(7,3)$. In: Barbero, Á. (eds) Coding Theory and Applications. Lecture Notes in Computer Science, vol 5228. Springer, Berlin, Heidelberg.
\bibitem{6} William J. Martin. Completely regular designs of strength one. Journal of Algebraic Combinatorics, 3(2):170–185, 1994.
\bibitem{7} Aaron D. Meyerowitz. Cycle-balanced partitions in distance-regular graphs. Discrete Mathematics, 264:149–165, 2003.
\bibitem{8} S. Morteza Mirafzal. A note on the automorphism groups of Johnson graphs. 2017. arXiv:1702.02568v4 [math.CO].
\end{thebibliography}
\end{document}